\newtheorem{thm}{Theorem}[section]
\newtheorem{theorem}[thm]{Theorem}
\newtheorem{corollary}[thm]{Corollary}
\newtheorem{lemma}[thm]{Lemma}
\newtheorem{proposition}[thm]{Proposition}
\theoremstyle{definition}
\newtheorem{definition}[thm]{Definition}
\newtheorem{question}[thm]{Question}
\theoremstyle{remark}
\newtheorem{remark}[thm]{Remark}
\newtheorem{example}[thm]{Example}
\numberwithin{equation}{section}
\newcommand{\ol}{\overline}
\newcommand{\td}{\widetilde}
\newcommand{\mb}{\mathbb}
\newcommand{\vsimeq}{\rotatebox[origin=c]{-90}{\footnotesize $\backsimeq$}}
\renewcommand{\theta}{\vartheta}
\renewcommand{\ni}{\nu}
\DeclarePairedDelimiter{\floor}{\lfloor}{\rfloor}
\newcommand{\set}[1]{\left\{#1\right\}}
\newcommand{\spn}[1]{\langle#1\rangle}
\newcommand{\scal}[2]{{\langle #1, #2 \rangle}}
\DeclareMathOperator{\im}{im}
\DeclareMathOperator{\lk}{lk}
\newcommand{\de}{\partial}
\newcommand{\sm}{\setminus}
\newcommand{\N}{\mathbb{N}}
\newcommand{\Z}{\mathbb{Z}}
\newcommand{\Q}{\mathbb{Q}}
\newcommand{\F}{\mathbb{F}}
\newcommand{\RP}{\mathbb{RP}}
\newcommand{\nbd}{\mathcal{N}} 
\newcommand{\Wo}{W^\circ}
\newcommand{\Fh}{\widehat{F}}
\newcommand{\mK}{\overline{K}}
\newcommand{\A}{\mathcal{A}}
\newcommand{\G}{\Gamma}
\DeclareMathOperator{\HF}{HF}
\newcommand{\HFi}{\HF^\infty}
\DeclareMathOperator{\CFK}{CFK}
\newcommand{\CFKi}{\CFK^\infty}
\newcommand{\CFKm}{\CFK^-}
\newcommand{\rCFKm}{\underline{\CFK}^-}
\renewcommand{\a}{\alpha}
\renewcommand{\b}{\beta}
\newcommand{\g}{\gamma}
\newcommand{\ours}{\varphi} 
\newcommand{\ourlim}{\omega}
\DeclareMathOperator{\Spin}{Spin}
\DeclareMathOperator{\tor}{tor}
\newcommand{\spinc}{spin$^c$\xspace}
\newcommand{\Spinc}{\Spin^c}
\newcommand{\Spinctor}{\Spin^c_{\tor}}
\newcommand{\s}{\mathfrak{s}}
\newcommand{\rs}{\s^\circ}
\renewcommand{\t}{\mathfrak{t}}
\begin{document}

\title[Correction terms and the non-orientable genus]{Correction terms and\\the non-orientable slice genus}

\author{Marco Golla}
\address{Mathematical Institute, University of Uppsala,
Box 480, 751 06 Uppsala, Sweden}
\email{marco.golla@math.uu.se}
\thanks{MG is supported by the Alice and Knut Wallenberg foundation.}

\author{Marco Marengon}
\address{Department of Mathematics, Imperial College London,
180 Queen's Gate, London SW7 2AZ, UK}
\email{m.marengon13@imperial.ac.uk}
\thanks{MM was supported by an EPSRC Doctoral Training Award.}

\date{}

\begin{abstract}
By considering negative surgeries on a knot $K$ in $S^3$, we derive a lower bound to the non-orientable slice genus $\g_4(K)$
in terms of the signature $\sigma(K)$ and the concordance invariants $V_i(\mK)$, which strengthens a previous bound given by Batson,
and which coincides with Ozsv\'ath--Stipsicz--Szab\'o's bound in terms of their $\upsilon$ invariant for L-space knots and quasi-alternating knots.
A curious feature of our bound is superadditivity, implying, for instance, that the bound on the stable non-orientable genus is sometimes better than the one on $\g_4(K)$.
\end{abstract}

\maketitle

\section{Introduction}
\label{sec:introduction}
Given a knot $K$ in $S^3$, it is a very classical problem to determine the minimal genus of an orientable surface $F$ in $B^4$ whose boundary is $K$. More recently, some attention has been drawn to the case of non-orientable surfaces instead. Namely, one can define $\gamma_4(K)$ as the minimal non-orientable genus among all such surfaces, where the non-orientable genus of $F$ is defined as $b_1(F)$.

Batson, and Ozsv\'ath, Stipsicz, and Szab\'o, on the other hand, gave lower bounds in terms of Heegaard Floer data.
More precisely, Batson proved that
\begin{equation}\label{e:batson}
\gamma_4(K) \ge \frac{\sigma(K)}2 - d(S^3_{-1}(K)),
\end{equation}
where $d(S^3_{-1}(K))$ is the Heegaard Floer correction term (or $d$-invariant) of the 3-manifold obtained as $(-1)$-surgery along $K$, in its unique \spinc structure (which is hence omitted from the notation)~\cite{Batson}.
Ozsv\'ath, Szab\'o and Stipsicz proved that
\begin{equation}\label{e:OSSz}
\gamma_4(K) \ge \left|\frac{\sigma(K)}2 - \upsilon(K)\right|,
\end{equation}
where $\upsilon$ is a concordance invariant defined in terms of the Floer homology package~\cite[Theorem 1.2]{OSSz-unoriented}.
Gilmer and Livingston gave lower bounds on $\gamma_4$ using Casson--Gordon invariants~\cite{GilmerLivingston}.

The main goal of this manuscript is to provide a new lower bound that generalises Batson's. It will be phrased in terms of the concordance invariants $\{V_i(\overline K)\}_i$ associated to the mirror $\overline K$ of $K$; these invariants were defined by Rasmussen~\cite{rasmussenVi} and further studied by Ni and Wu~\cite{NiWu} (see also Section~\ref{sec:dinvariants} below).
We will further package these invariants into a single integer-valued invariant that we call $\ours$, $\ours(K) = \min_{m\ge 0}\{m+2V_m(\overline K)\}$.

\begin{thm}\label{t:main}
For every knot $K$ in $S^3$,
\begin{equation}\label{e:us}
\gamma_4(K) \ge \frac{\sigma(K)}2 - \ours(K).
\end{equation}
\end{thm}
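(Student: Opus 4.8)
\emph{Reduction to a single surgery coefficient.} Since $\ours(K)=\min_{m\ge 0}\{m+2V_m(\mK)\}$, inequality \eqref{e:us} is equivalent to the family
\[
\gamma_4(K)\ \ge\ \frac{\sigma(K)}{2}-m-2V_m(\mK),\qquad m\ge 0,
\]
and I would fix $m$ and establish this single inequality. Let $F\subset B^4$ be a non-orientable surface with $\partial F=K$ realising $b_1(F)=\gamma_4(K)=:h$, and write $e=e(F)\in\Z$ for its normal Euler number, measured with respect to the Seifert framing of $K$.

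\emph{A negative-definite filling of a negative surgery on $K$.} Fix an integer $n\ge 2m$, attach a $2$-handle to $B^4$ along $K$ with framing $-n$, and call the result $X$: it is negative-definite, $b_2(X)=1$, and $\partial X=S^3_{-n}(K)$. Inside $X$ the surface $F$ closes up, together with the core of the handle, to a closed non-orientable surface $\Fh$ of normal Euler number $e-n$. I would then trade $\Fh$ for an honest sphere: after blowing $X$ up at finitely many points — so that $X$ is replaced by a negative-definite $4$-manifold $Z$ with $\partial Z=S^3_{-n}(K)$ — the surface $\Fh$ becomes isotopic to an embedded $2$-sphere $S\subset Z$, whose homology class, self-intersection, and pairings with $\Spinc$ structures are recorded explicitly in terms of $h$, $e$, $n$. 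Using the class of $S$ together with those of the exceptional spheres to single out a $\Spinc$ structure $\t$ on $Z$ with controlled $c_1(\t)^2$ whose restriction to $S^3_{-n}(K)$ carries the label $m$, and applying the Ozsv\'ath--Szab\'o inequality $c_1(\t)^2+b_2(Z)\le 4\,d\bigl(S^3_{-n}(K),\t|_{\partial Z}\bigr)$, I would obtain a numerical inequality among $h$, $e$, $n$, $m$ and $d(S^3_{-n}(K),m)$.

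\emph{Evaluating the correction term and concluding.} Next I would evaluate the right-hand side: $d(S^3_{-n}(K),m)=-d(S^3_{n}(\mK),m)$, and by the Ni--Wu formula
\[
d\bigl(S^3_{n}(\mK),m\bigr)=d\bigl(S^3_n(U),m\bigr)-2\max\{V_m(\mK),\,V_{n-m}(\mK)\},
\]
where $U$ is the unknot; since the $V_i$ are non-increasing and $n\ge 2m$, this maximum equals $V_m(\mK)$. Plugging in the elementary closed form for $d(S^3_n(U),m)$, the contributions carrying the surgery coefficient $-n$ cancel against the $c_1(\t)^2$ and $b_2(Z)$ terms, and what remains rearranges — after substituting the four-dimensional Gordon--Litherland relation between $e(F)$ and $\sigma(K)$ (equivalently, the value of the signature of the double cover of $B^4$ branched over $F$) — to $h\ge \frac12\sigma(K)-m-2V_m(\mK)$, which is \eqref{e:us}.

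\emph{Main obstacle.} The substance of the argument is the construction step: organising the blow-ups so that (i) the filling $Z$ stays negative-definite, (ii) one controls exactly which $\Spinc$ structure on $S^3_{-n}(K)$ is realised and what value $c_1(\t)^2$ takes on it, and (iii) the auxiliary framing $-n$ drops out on the nose, leaving the sharp residual term $m+2V_m(\mK)$ rather than something weaker; pinning down the precise dependence of the normal Euler number $e(F)$ on $\sigma(K)$ and $b_1(F)$ is the second delicate ingredient. Given the theorem, the superadditivity advertised in the abstract should then follow formally from the shape of $\ours$ and from subadditivity of the $\{V_i\}$ under connected sum.
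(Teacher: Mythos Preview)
Your outline parts ways with the paper at the construction step, and that step does not work as written. Blowing up $X$ at points --- on or off $\Fh$ --- leaves the diffeomorphism type of the embedded surface unchanged: the proper transform of a closed non-orientable surface is the same non-orientable surface, and tubing with exceptional spheres does not reduce $b_1$ either. So $\Fh$ cannot ``become isotopic to an embedded $2$-sphere'' in any blow-up $Z$, and without such a sphere (more to the point, without an integral second homology class whose square you control) there is nothing to feed into the negative-definite inequality. The sketch stalls here, before Ni--Wu ever enters.

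What the paper does instead is excise a tubular neighbourhood $N$ of $\Fh$ from $W$ and regard $\Wo=W\setminus N$ as a cobordism from the circle bundle $Q=\partial N$ over $\Fh\cong\#^h\RP^2$ to $S^3_{-n}(K)$. Because $b_1(Q)=h-1$ is typically positive, the ordinary Ozsv\'ath--Szab\'o inequality for rational homology spheres is unavailable; the paper invokes the twisted-coefficient version (Theorem~\ref{thm:OSz}) together with the Levine--Ruberman--Strle computation of the correction terms of $Q$. This is exactly where the dependence on $h=b_1(F)$ and on the normal Euler number $e$ enters the estimate. The \spinc bookkeeping of Section~\ref{sec:spinc} then singles out a structure on $\Wo$ that restricts to $\t_k$ on the surgery side and to a torsion, non-extendible structure on $Q$. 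After that your final paragraph is essentially right: one combines the resulting inequality with the signature inequality $b_1(F)\ge\sigma(K)-\tfrac{e}{2}$ (your ``Gordon--Litherland relation'', which here is an inequality rather than an identity), evaluates the surgery correction term via Ni--Wu, and an explicit choice of $n$ and $k$ makes the auxiliary surgery coefficient drop out to leave $h\ge\tfrac{\sigma(K)}{2}-m-2V_m(\mK)$.
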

The existence of such a bound was indicated, but not made explicit, by Batson in his PhD thesis~\cite{Batson-PhD}. Moreover, since $d(S^3_{-1}(K)) = 2V_0(\overline K)\ge\ours(K)$, this is a strengthening of~\eqref{e:batson}.
Equation~\eqref{e:us} also implies the existence of a bound in terms of the invariant $\ni^+$ defined by Hom and Wu~\cite{HomWu}. By definition, one has $V_{\ni^+(\mK)}(\mK)=0$, so Theorem~\ref{t:main} implies at once
\[
\g_4(K) \geq \frac{\sigma(K)}{2} - \ni^+(\mK).
\]
Note that this bound is formally identical to~\eqref{e:batson},~\eqref{e:OSSz} and~\eqref{e:us}; to the best of the authors' knowledge, this bound never appeared in literature.

We will show below that the bound of Theorem~\ref{t:main} is sharp (see Remark~\ref{rem:sharpness}), and agrees with the one of~\eqref{e:OSSz} in the case of alternating knots and L-space knots (see Proposition~\ref{prop:L-space}).

We note here that the bound~\eqref{e:us} presents the following curious feature: it is superadditive in the knot $K$, in the sense that the bound for $K_1\#K_2$ can be strictly larger than the sum of the two bounds for $K_1$ and $K_2$. As a special case, the bound for $nK$ can give more information on $\gamma_4(K)$ than the bound for $K$. In Proposition~\ref{prop:nonsoloilvinomigliorainvecchiando} we will exhibit an example where this phenomenon actually occurs.

Using superadditivity, we can optimise the bound above as follows:
\[
\gamma_4(K) \ge \frac{\sigma(K)}2 - \ourlim(K),
\]
where $\ourlim(K)$ is defined as
\[
\lim_{n\to\infty} \frac1n\ours(n K) \le \ours(K).
\]

\subsubsection*{Organisation of the paper}
In Section~\ref{sec:dinvariants} we recall some basic facts about \spinc structures
on 3- and 4-manifold and $d$-invariants, and we state all the
results concerning them that we use in this paper.
In Section~\ref{sec:construction} we fix the notation and we
construct a cobordism $\Wo$ from a particular 3-manifold $Q$
(defined in that section) to $S^3_{-n}(K)$, which will be crucial
to deduce the bound in Equation~\eqref{e:us}.
In Section~\ref{sec:spinc} we label \spinc structures on $\Wo$,
compute their Chern classes, and understand their restrictions to $\de\Wo$.
In Section~\ref{sec:bound} we apply a twisted version of Ozsv\'ath--Szab\'o's inequality
(see Theorem~\ref{thm:OSz}) to $\Wo$ to obtain the desired bound
on $\g_4(K)$.
In Section~\ref{sec:comparison} we compare our bound to
Batson's and Ozsv\'ath--Stipsicz--Szab\'o's (see Equations~\eqref{e:batson}
and~\eqref{e:OSSz}), and we refine it using superadditivity.
Finally, in Section~\ref{sec:example}, we give an example
of a knot $K$ where the bound for $nK$ is actually better
the bound for $K$.

\subsubsection*{Acknowledgments} We would like to thank Tom Hockenhull for his encouragement and his patience; Antonio Alfieri, Fyodor Gainullin, Jen Hom, David Krcatovich, and Andr\'as Stipsicz for interesting conversations; a special thanks goes to Joshua Batson for sharing some of his unpublished computations.

\section{All you need is correction terms}
\label{sec:dinvariants}

Given an oriented manifold $M$ of dimension 3 or 4, recall that the set of \spinc
structures $\Spinc(M)$ is an affine space over $H^2(M;\Z)$.
Given an oriented 4-manifold $X$ with boundary
$\de X = Y$, the restriction map
\begin{equation}
\label{eq:restriction}
\Spinc(X) \to \Spinc(Y),
\end{equation}
is modelled over
\[
H^2(X;\Z) \to H^2(Y;\Z).
\]

To every \spinc structure $\s \in \Spinc(M)$ it is possible to associate an element
in $H^2(M;\Z)$, known as the (\emph{first}) \emph{Chern class} of $\s$, and usually
denoted by $c_1(\s)$. The map
\[
c_1 \colon \Spinc(M) \to H^2(M;\Z)
\]
is injective if and only if $H^2(M; \Z)$ has no 2-torsion.
A \spinc structure $\s \in \Spinc(M)$ is called \emph{torsion}
if $c_1(\s)$ is a torsion element in $H^2(M;\Z)$.

Let $-M$ denote the manifold $M$ endowed with the opposite orientation. There is a
canonical bijection
\[
\iota \colon \Spinc(M) \to \Spinc(-M),
\]
which is modelled over the canonical isomorphism $\iota \colon H^2(M;\Z) \to H^2(-M;\Z)$
(see~\cite[Section 1.2.3]{quadratic}).
If $\s \in \Spinc(M)$, we will denote by the same letter $\s$ the corresponding
\spinc structure on $-M$. It is worth noting that such a bijection commutes with
the restriction map (see Equation~\eqref{eq:restriction}), and that
\[
c_1(\iota(\s)) = \iota(c_1(\s)).
\]

\begin{remark}
\label{rem:label}
Let $X^4$ be the trace of the 2-handle cobordism from $S^3$ to $S^3_n(K)$,
where $K$ is a knot in $S^3$ and $n > 0$ is a positive integer. Then we
can label the \spinc structures on $X$ as follows:
we let $\s_k$ denote the unique \spinc structure on $X$ such that
\[
\scal{c_1(\s_k)}{[\Sigma]} = n + 2k,
\]
where $\Sigma$ is a Seifert surface for $K$ in $S^3\times I$, capped off with the core of the 2-handle.
From the labelling above, we derive a labelling of \spinc structures
over $S^3_n(K)$ by $\Z/n\Z$, by setting
\[
\t_{k} := \s_k |_{S^3_n(K)},
\]
where we do not make the distinction between an integer and its class modulo $n$. Here and in the following, we refer the reader to~\cite[Section 2.4]{integersurgeries} for further details.
\end{remark}

In what follows, we say that a pair $(Y,\t)$ as above, where $\t$ is a torsion \spinc structure on the 3-manifold $Y$, is a \emph{torsion} \spinc 3-manifold.

In~\cite{absolutely}, Ozsv\'ath and Szab\'o introduce a Heegaard
Floer theoretical invariant $d(Y,\t)$, called the \emph{correction term} or $d$-\emph{invariant}, associated to a pair $(Y, \t)$, where $Y$ is a
rational homology 3-sphere equipped with a \spinc structure $\t$.
In~\cite[Section 9]{absolutely}, they explain how it is possible to generalise it to invariants $d_b$ and $d_t$ (\emph{bottom} and \emph{top}) associated to a torsion \spinc 3-manifold $(Y, \t)$, where $Y$ is now a 3-manifold with \emph{standard} $\HFi$ (which is equivalent to having trivial triple cup product~\cite{Lidman}).
See also~\cite[Section 3]{LRS} for an introduction to $d$-invariants of arbitrary 3-manifolds with standard $\HFi$.
Behrens and the first author used Heegaard Floer homology with twisted coefficients to generalise this further to an invariant $\underline{d}(Y, \t)$ associated to an arbitrary torsion \spinc 3-manifold $(Y, \t)$~\cite{Behrens}.

In the case of rational homology 3-spheres we have
\[
d(Y,\t) = d_b(Y,\t) = d_t(Y,\t) = \underline{d}(Y,\t).
\]

More generally, one has the following.

\begin{theorem}[{\cite[Proposition 4.2]{absolutely}}, {\cite[Proposition 3.7]{LRS}}, and {\cite[Proposition 3.8]{Behrens}}]
\label{thm:symmetry}
Let $(Y,\t)$ be a torsion \spinc $3$-manifold, and suppose that $Y$ has standard $\HFi$.
Then, under the canonical identification $\Spinc(Y) \cong \Spinc(-Y)$,
\[
d_b(Y, \t) = - d_t(-Y, \t) = \underline{d}(Y,\t).
\]
\end{theorem}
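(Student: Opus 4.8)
The plan is to prove the two equalities separately: $d_b(Y,\t) = -d_t(-Y,\t)$ by an orientation‑reversal (duality) argument, and $d_b(Y,\t) = \underline{d}(Y,\t)$ by comparing the untwisted theory with the twisted one used to define $\underline{d}$. First I would recall what makes $d_b$ and $d_t$ well defined: since $Y$ has standard $\HFi$, there is an isomorphism of relatively graded $\Z[U,U^{-1}]$‑modules
\[
\HFi(Y,\t) \;\cong\; \Lambda^* H^1(Y;\Z) \otimes_{\Z} \Z[U,U^{-1}],
\]
in which the summand $\Lambda^j H^1(Y;\Z)$ is supported in a single grading that decreases with $j$. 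Thus there is a canonical \emph{bottom} $\Z[U,U^{-1}]$‑tower (coming from $\Lambda^{b_1}H^1(Y;\Z)$) and a canonical \emph{top} tower (coming from $\Lambda^0 H^1(Y;\Z)=\Z$), and $d_b(Y,\t)$ (resp.\ $d_t(Y,\t)$) is the minimal grading of a nonzero element in the image of the bottom (resp.\ top) tower under the natural map $\HFi(Y,\t)\to\HFp(Y,\t)$. When $b_1(Y)=0$ there is nothing to twist and all four invariants coincide tautologically, so one may assume $b_1(Y)>0$.

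For the first equality I would run the argument of \cite[Proposition 4.2]{absolutely} in this setting. The plus, minus, and infinity flavours of $Y$ and $-Y$ are related by the standard duality isomorphisms, which reverse the grading and are compatible with the long exact sequence $\HFm\to\HFi\to\HFp$ and its cohomological counterpart. Since dualizing reverses the grading, the standard‑$\HFi$ decomposition above is carried to the corresponding decomposition for $-Y$ with the roles of $\Lambda^0$ and $\Lambda^{b_1}$ interchanged; hence the bottom tower of $\HFi(Y,\t)$ corresponds to the top tower of $\HFi(-Y,\t)$. Tracking gradings through the duality isomorphisms — noting that the potential additive shift cancels against the shift in the $U$‑action long exact sequence, exactly as in the rational homology sphere case — converts the minimum defining $d_b(Y,\t)$ into minus the minimum defining $d_t(-Y,\t)$.

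For the second equality I would compare the twisted complex defining $\underline{d}$ with the untwisted one. Let $R=\Z[H^1(Y;\Z)]$ and let $\F$ be the field of fractions of $R$ (a field of rational functions, since $H^1(Y;\Z)$ is free of rank $b_1$). After base change to $\F$, the totally twisted complex computing $\underline{\HFi}(Y,\t)$ has homology a single $\F[U,U^{-1}]$‑tower: over $\F$ a Koszul‑type argument collapses the factor $\Lambda^* H^1(Y;\Z)$ to its top exterior power $\Lambda^{b_1}H^1(Y;\Z)$, and naturality of the comparison map identifies this surviving tower with the bottom tower of the standard decomposition of $\HFi(Y,\t)$. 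Since $\underline{d}(Y,\t)$ is, by definition, the minimal grading of the image of this surviving tower in $\underline{\HFp}(Y,\t)\otimes_R\F$, and a naturality/universal‑coefficients comparison shows that this image is not raised in grading relative to the untwisted picture, one concludes $\underline{d}(Y,\t)=d_b(Y,\t)$.

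The main obstacle, in both parts, is grading bookkeeping of exactly the sort that is usually left implicit: in the first part, checking that the bottom/top swap comes with the correct sign \emph{and} with no leftover additive constant; in the second, checking that localizing the twisted complex really leaves the \emph{bottom} tower (and not some higher one) and that its image in $\underline{\HFp}$ does not jump up in grading. This is also the point at which the hypothesis that $Y$ has standard $\HFi$ is essential — it is precisely what singles out the bottom tower that both $d_b$ and $\underline{d}$ are built from — and it is why the general statement had to be assembled from \cite{absolutely}, \cite{LRS}, and \cite{Behrens} rather than appearing in one place.
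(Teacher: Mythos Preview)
The paper does not give its own proof of this theorem: it is stated as a result imported from the literature, with the three equalities attributed respectively to \cite[Proposition~4.2]{absolutely}, \cite[Proposition~3.7]{LRS}, and \cite[Proposition~3.8]{Behrens}. There is therefore nothing in the present paper to compare your argument against; the authors simply quote the statement and use it.

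That said, your outline is in the spirit of those references. The duality argument for $d_b(Y,\t) = -d_t(-Y,\t)$ is exactly the mechanism in \cite{absolutely} and \cite{LRS}: orientation reversal dualises the chain complex, swaps $\HFp$ and $\HFm$, reverses gradings, and exchanges the top and bottom exterior powers in the standard $\HFi$ model. For $d_b(Y,\t) = \underline{d}(Y,\t)$ the idea of passing to the field of fractions of $\Z[H^1(Y;\Z)]$ so that the twisted $\HFi$ collapses to a single tower is indeed what underlies \cite{Behrens}. Your sketch is honest about where the real work lies --- the grading bookkeeping and the identification of which tower survives localisation --- but as written it is a plan rather than a proof: the assertions that ``the additive shift cancels'' and that ``the image in $\underline{\HFp}$ does not jump up in grading'' are precisely the content of the cited propositions, and you have not supplied arguments for them. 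If you were asked to prove this from scratch you would need to carry out those verifications; if you were asked to compare with the present paper, the answer is that the paper defers entirely to the cited sources.
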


In the rest of this section, we state the results that we need about $d$-invariants.

%
%

The following result by Ni and Wu allows us to compute
$d$-invariants for surgeries on a knot $K \subseteq S^3$ in terms of some knot invariants $V_i$,
which were first introduced in~\cite{rasmussenVi} with the name of $h_i$.
We refer to~\cite[Section 2.2]{NiWu} for the definition of $V_i$.

\begin{theorem}[{\cite[Proposition 1.6 and Remark 2.10]{NiWu}}]
\label{thm:niwu}
Given positive integers $0 \leq k < n$, then
\[
d(S^3_{n}(K), \t_{k}) = - \frac{n-(2k-n)^2}{4n} - 2 \max \set{V_k(K), V_{n-k}(K)}.
\]
\end{theorem}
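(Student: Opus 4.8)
\textbf{Proof proposal for Theorem~\ref{thm:niwu}.}

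The plan is to reduce the computation of $d(S^3_n(K),\t_k)$ to the large-surgery formula for $d$-invariants and then track what happens under the surgery exact sequence (or, equivalently, the mapping cone formula). First I would recall that for all sufficiently large $N$, the correction term of $N$-surgery in the \spinc structure $\t_j$ labelled as in Remark~\ref{rem:label} is computed directly from the knot Floer complex: one has
\[
d(S^3_N(K),\t_j) = \frac{(2j-N)^2-N}{4N} - 2V_j(K)
\]
for $0\le j\le N/2$, with $V_j(K)$ defined (following~\cite{rasmussenVi,NiWu}) as the rank of a certain quotient of $H_*(A_j^+)$ by the image of $\HFp(S^3)$, where $A_j^+$ is the large-surgery subquotient complex. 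The symmetry $V_j(K)=V_{-j}(K)$ combined with the $\Z/N\Z$-indexing gives the value for $N/2\le j\le N$ as $V_{N-j}(K)$, which is where the $\max\{V_k,V_{n-k}\}$ will eventually come from. This ``large $N$'' statement is essentially the content of~\cite[Section 4]{integersurgeries} together with the identification of the relevant ranks with the $V_i$; I would cite it rather than reprove it.

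Next I would pass from large $N$ down to the given $n>0$. The tool here is Ozsv\'ath--Szab\'o's integer surgery formula: $\HFp(S^3_n(K),\t_k)$ is the homology of a mapping cone $\mathbb{X}^+(n,k)$ built from the complexes $A_s^+$ and $B_s^+ = \CFp(S^3)$, with vertical maps $v_s$ and diagonal maps $h_s$. The $d$-invariant of the mapping cone is read off from the ``bottom-most'' tower, and for $n>0$ the relevant truncation of the mapping cone involves only finitely many $A_s^+$'s with $s\equiv k\pmod n$; the grading shifts between the $S^3_n(K)$ picture and the $S^3_N(K)$ picture for $N\gg 0$ differ by the explicit rational correction $-\frac{n-(2k-n)^2}{4n}$ versus $\frac{(2j-N)^2-N}{4N}$, which one checks is exactly accounted for by the change in the self-intersection form of the 2-handle cobordism (the term $\frac{c_1(\s_k)^2+b_2}{4}$ in the absolute grading formula). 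The upshot is that the ``rational'' part of $d$ is the surgery-coefficient-dependent term $-\frac{n-(2k-n)^2}{4n}$, and the ``integral'' part is $-2$ times the relevant $V$.

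The one genuinely substantive point — and the step I expect to be the main obstacle — is showing that the correct integral contribution is $-2\max\{V_k(K),V_{n-k}(K)\}$ rather than, say, $-2V_k(K)$ or $-2(V_k(K)+V_{n-k}(K))$. This is where the structure of the mapping cone matters: the bottom tower of $\HFp(S^3_n(K),\t_k)$ sits at a grading determined by the longest it takes a generator to ``escape'' through the combined action of the vertical maps $v_s$ (governed by $V_s$) and the horizontal maps $h_s$ (governed by $H_s$, which equals $V_{-s}=V_{|s|}$ on the nose by the symmetry of $\CFK^\infty$). For $0\le k<n$ the truncated zig-zag has a distinguished ``corner'' at $A_k$ where one must go up by $v_k$ (cost $V_k$) and also the symmetric corner contributing $V_{n-k}$, and the grading of the surviving bottom element is controlled by the \emph{larger} of these two obstructions — hence the maximum. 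Making this precise means carefully identifying which $A_s^+$ contains the bottom of the tower and computing its grading; this is exactly the computation carried out in~\cite[Remark 2.10]{NiWu}, so in the write-up I would either reproduce their mapping-cone bookkeeping or, more economically, invoke~\cite[Proposition 1.6]{NiWu} together with the identification of their $d$-invariant normalisation with the one used here, checking only that the \spinc labelling conventions of Remark~\ref{rem:label} match theirs. Finally I would note the range condition $0\le k<n$ is exactly what makes the two representatives $k$ and $n-k$ of the \spinc structure both lie in $[0,n]$, so that the formula is symmetric and well-defined modulo $n$.
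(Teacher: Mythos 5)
The paper offers no proof of this statement: it is quoted directly from Ni--Wu, and your proposal, after sketching the large-surgery formula and the mapping-cone bookkeeping, ultimately defers to the very same citation, so the two approaches coincide. One loose phrase worth fixing if you keep the sketch: the symmetry that handles the range $N/2\le j\le N$ is the conjugation symmetry of \spinc structures (equivalently $H_s=V_{-s}$), not ``$V_j=V_{-j}$'', which is false for $j>0$ since $V_{-j}=V_j+j$.
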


Correction terms can be used to give restrictions to intersection forms of 4-manifolds bounding a given 3-manifold (compare also with~\cite[Theorem 9.15]{absolutely}).


\begin{theorem}[{\cite[Theorem 4.1]{Behrens}}]
\label{thm:OSz}
Let $(W,\s)$ be a negative semi-definite \spinc cobordism from $(Y,\t)$ to $(Y',\t')$, two torsion \spinc $3$-manifolds, such that the map $H_1(Y;\Q)\to H_1(W;\Q)$ induced by the inclusion is injective.
Then
\[
c_1(\s)^2 + b_2^-(X) \leq 4 \, \underline{d}(Y', \t') + 2\,b_1(Y') - 4\,\underline{d}(Y, \t) - 2 \, b_1(Y).
\]
\end{theorem}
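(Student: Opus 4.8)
\begin{idea}
The statement is \cite[Theorem~4.1]{Behrens}; the plan is to follow Ozsv\'ath and Szab\'o's argument from the negative-definite case (\cite[Section~9]{absolutely}) over the Novikov ring of fully twisted coefficients in place of $\Z$. We may assume $W$ is connected. Negative semi-definiteness gives $b_2^+(W)=0$, hence $\sigma(W)=-b_2^-(W)$, and $c_1(\s)^2\in\Q$ makes sense because $c_1(\s)$ restricts to torsion classes on $\de W$. Write $\underline{\HFi},\underline{\HFp}$ for the Floer modules with fully twisted coefficients over the relevant Novikov ring $\Lambda$, and $\underline{F}^\infty_{W,\s},\underline{F}^+_{W,\s}$ for the induced cobordism maps. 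The structural fact to exploit is that, over $\Lambda$, $\underline{\HFi}(Y,\t)$ is \emph{standard} for \emph{every} torsion \spinc $3$-manifold: it is free of rank one over $\Lambda[U,U^{-1}]$, supported in a single coset of $2\Z$ in $\Q$, and the image of $\underline{\HFi}(Y,\t)\to\underline{\HFp}(Y,\t)$ is a tower $\mathcal T^+_\Lambda$ with minimal grading $\underline d(Y,\t)$ (which equals $d_b(Y,\t)$ by Theorem~\ref{thm:symmetry}).

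The heart of the matter — and the step I expect to be the main obstacle — is the surjectivity of the twisted cobordism map: under the hypotheses, $\underline{F}^\infty_{W,\s}$ should be surjective, hence (both sides being free of rank one over $\Lambda[U,U^{-1}]$) an isomorphism. When $Y$ and $Y'$ are rational homology spheres this is Ozsv\'ath--Szab\'o's observation that a negative semi-definite cobordism induces an isomorphism on $\HFi$, and it uses $b_2^+(W)=0$ essentially (if $b_2^+(W)>0$ the $\HFi$-map vanishes). In the presence of $b_1$ the claim is delicate — a cobordism carrying excess first homology can induce the zero map on $\HFi$ — and here one uses both the injectivity of $H_1(Y;\Q)\to H_1(W;\Q)$ and the passage to fully twisted coefficients, which trade the $\Lambda^*H^1$-summand of the untwisted $\HFi$ for the Novikov variable; combining this with the injectivity hypothesis, and after surgering $W$ to control its first homology together with a handle-decomposition analysis (the $1$- and $3$-handle cobordisms being elementary, the $2$-handle ones controlled by the intersection form), one reduces to the negative-definite situation. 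This is the content of \cite[Section~9]{absolutely} (with $\Z$-coefficients, under a standard-$\HFi$ assumption) and of \cite{Behrens} (fully twisted, with no such assumption), and I would import it essentially verbatim.

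Granting the surjectivity, the rest is a grading count. By naturality of the cobordism maps in the triangle $\underline{\HFm}\to\underline{\HFi}\to\underline{\HFp}$, surjectivity of $\underline{F}^\infty_{W,\s}$ forces the induced map on towers $\im\!\left(\underline{\HFi}(Y,\t)\to\underline{\HFp}(Y,\t)\right)\to\im\!\left(\underline{\HFi}(Y',\t')\to\underline{\HFp}(Y',\t')\right)$ to be surjective as well: a preimage in $\underline{\HFi}(Y',\t')$ of a tower element of $Y'$ is hit by $\underline{F}^\infty_{W,\s}$, and the projection of its own preimage lies in the tower of $Y$. These two towers are copies of $\mathcal T^+_\Lambda$ with minimal gradings $\underline d(Y,\t)$ and $\underline d(Y',\t')$, and the map between them is $\Lambda[U,U^{-1}]$-linear and shifts the grading by the cobordism grading shift $s$. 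To hit the bottom generator of the target, of grading $\underline d(Y',\t')$, one needs a nonzero homogeneous preimage, necessarily of grading $\underline d(Y',\t')-s$, which is therefore at least $\underline d(Y,\t)$; that is,
\[
s \;\le\; \underline d(Y',\t') - \underline d(Y,\t).
\]

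It remains to unwind $s$. Substituting $s=\tfrac14\!\left(c_1(\s)^2-2\chi(W)-3\sigma(W)\right)$, using $\sigma(W)=-b_2^-(W)$, and rewriting $\chi(W)$ through the homology long exact sequences of the pairs $(W,Y)$ and $(W,Y')$, the injectivity hypothesis, and the half-lives-half-dies principle (which together give $\chi(W)-b_2^-(W)\le b_1(Y')-b_1(Y)$), the inequality $s\le\underline d(Y',\t')-\underline d(Y,\t)$ rearranges to
\[
c_1(\s)^2 + b_2^-(W) \;\le\; 4\,\underline d(Y',\t') + 2\,b_1(Y') - 4\,\underline d(Y,\t) - 2\,b_1(Y),
\]
as claimed. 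This last step is routine bookkeeping; the surjectivity of $\underline{F}^\infty_{W,\s}$ is the genuine difficulty.
\end{idea}
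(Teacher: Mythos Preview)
The paper does not prove this statement: Theorem~\ref{thm:OSz} is quoted verbatim from \cite[Theorem~4.1]{Behrens} and no proof is given in the paper itself. So there is no ``paper's own proof'' to compare against.

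That said, your sketch is a faithful outline of the argument in the cited reference. The essential inputs are exactly the ones you name: over fully twisted (Novikov) coefficients, $\underline{\HFi}$ is standard for every torsion \spinc $3$-manifold; under $b_2^+(W)=0$ and the injectivity hypothesis on $H_1(Y;\Q)$, the twisted cobordism map $\underline{F}^\infty_{W,\s}$ is an isomorphism; and the inequality then falls out of the grading shift formula together with the bookkeeping relating $\chi(W)$, $\sigma(W)$, and the $b_1$ of the ends. You correctly identify the isomorphism step as the nontrivial one and defer it to \cite{Behrens}; the rest is indeed routine.
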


\section{Notation and construction}
\label{sec:construction}

Let $K$ be a knot in $S^3$. If we consider $S^3$ as the boundary of the
4-ball $B^4$, the (orientable) slice genus $g_4$ is defined as the minimum genus
of a smooth orientable surface is $B^4$ whose boundary is $K$, and it is
a well-studied invariant of $K$. More recently, the non-orientable
slice genus $\g_4$ has been studied. We have the following definition.

\begin{definition}
\label{def:nosg} 
Given a knot $K$ in $S^3$, we define its \emph{non-orientable slice genus} as
\[
\g_4(K) = \min \set{ b_1(F) \,\middle|\,
\mbox{$F \hookrightarrow B^4$ smooth, non-orientable, $\de F =K$}
\,},
\]
where $b_1(F)$ denotes the first Betti number of $F$.
\end{definition}

\begin{remark}
With this definition of $\gamma_4$, one always has $\gamma_4(K)\ge 1$.
One could also consider the 4-dimensional crosscap number instead; this is the minimal number $h$ such that $K$ bounds a punctured $\#^h\RP^2$ in $B^4$.
The two definitions are indeed equivalent except when $K$ is slice, in which case our definition yields $\gamma_4(K)=1$, while the 4-dimensional crosscap number is 0.
We note here that, when $K$ is slice, the bound in~\eqref{e:us} is in any case $\gamma_4(K) \ge 0$, so this is in fact a bound for the crosscap number as well; this is true since, when $K$ is slice, both $\sigma(K)$ and $\ours(L)$ vanish (see Proposition~\ref{p:ours-properties}(2) below).
Our proof, however, actually uses the definition of $\gamma_4$ given above, to which therefore we stick.
\end{remark}

In~\cite{Batson}, Batson proved that the non-orientable slice genus
can be arbitrarily large. More specifically, for a non-orientable
surface $F$ as in Definition~\ref{def:nosg}, Batson gives the following
inequality (see~\cite[Theorem 4]{Batson}):
\begin{equation}
\label{eq:batson1}
b_1(F) + 2\, d(S^3_{-1}(K)) \geq \frac{e(F)}{2}.
\end{equation}
Here $d(S^3_{-1}(K))$ denotes the $d$-invariant of $S^3_{-1}(K)$ in the
unique \spinc structure, whereas $e(F)$ is the \emph{normal Euler number}
of $F$: given a non-vanishing section $s$ of the normal bundle $\ni_F$
(which always exists since $F$ deformation retracts on a 1-complex), we
let
\[
e(F) = -\lk(K, s(K)).
\]
In~\cite{Batson}, Batson combines Equation~\eqref{eq:batson1} and the
`signature' inequality
\begin{equation}
\label{eq:signature}
b_1(F) \geq \sigma(K) - \frac{e(F)}{2}
\end{equation}
to derive the bound for the non-orientable slice genus in Equation~\eqref{e:batson}.
The main result of this paper is a generalisation of Equation~\eqref{eq:batson1},
where instead of the $(-1)$-surgery along $K$ we consider $(-n)$-surgeries
for arbitrary integers $n \geq 1$. Inspired by~\cite{Batson} and~\cite{LRS},
we construct a negative semi-definite cobordism from a 3-manifold $Q$ to $S^3_{-n}(K)$,
and use Theorem~\ref{thm:OSz} to give a lower bound to $b_1(F)$.

%
\begin{figure}[t]
\labellist
\pinlabel $X$ at 210 140
\pinlabel $\widehat F$ at 163 170
\pinlabel $S^3_{-n}(K)$ at 40 170
\pinlabel $K$ at 100 83
\pinlabel $F$ at 140 30
\pinlabel $S^3$ at 0 108
\pinlabel $B^4$ at 168 8
\endlabellist
\centering
\includegraphics{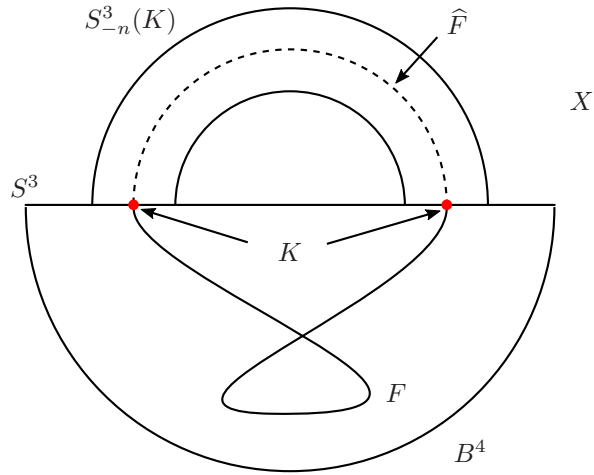}
\caption{The figure shows the 4-manifold $W$ obtained by attaching a
$(-n)$-framed 2-handle (whose trace we denote by $X$) to $B^4$ along
a knot $K \subseteq S^3$. $N = \nbd_W(\Fh)$ denotes a neighbourhood of
$\Fh$ in $W$, and $Q = \de N$.}
\label{fig:construction}
\end{figure}

We now give the details of the construction, illustrated in Figure~\ref{fig:construction}.
Let $K$ be a knot in $S^3 = \de B^4$, and let $F$ denote a smooth
non-orientable surface properly embedded in $B^4$ such that $\de F = K$.
Fix an integer $n>0$.
Let $W$ denote the 4-manifold obtained by attaching a $(-n)$-framed 2-handle to $B^4$, along $K\subset \partial B^4$.
We denote with $Y$ the boundary of $W$, i.e. $Y = S^3_{-n}(K)$.
Then the surface $F$ can be capped off with the core of the 2-handle to obtain a closed surface $\Fh \subseteq W$.
Notice that
\[
b_1(\Fh) + 1 = b_1(F) =: h.
\]
If $e = e(F)$ denotes the normal Euler number of $F$, and $e(\Fh)$ denotes the Euler number of the closed surface $\Fh$, then we have
\[
e(\Fh) = e - n.
\]
As already noticed in~\cite{Batson}, $e$ is even, because the self-intersection of $F$ in $B^4$ can be computed algebraically over $\Z/2\Z$.

Let $N = \nbd_W(\Fh)$ denote a regular neighbourhood of $\Fh$ in $W$.
We define $Q = \de N$.
Notice that $Q$ (resp.~$N$) is a circle (resp.~disc) bundle over the closed surface $\Fh \cong (\RP^2)^{\#h}$ of Euler number $e-n$.
According to the notation in~\cite[Section 2]{LRS}, we have $N \cong P_{h, e-n}$ and $Q \cong Q_{h, e-n}$, and moreover $Q$ has standard $\HFi$.

The manifold $\Wo := W \sm N$ is a cobordism between $Q$ and $S^3_{-n}(K)$.
Since the labelling of \spinc structures is better understood for positive surgeries, we consider also the manifold $-W$, obtained from $W$ by reversing the orientation;
$-W$ is the 4-manifold obtained by attaching an $n$-framed 2-handle to $B^4$ along $\ol K$.
This allows us to label the \spinc structures on $W$ and on $Y$; by a slight abuse of notation, we write $\s_k$ and $\t_k$, dropping the identifications $\Spinc(W)\cong\Spinc(-W)$ and $\Spinc(Y) = \Spinc(-Y)$.

\section{Labelling $\Spinc$ structures}
\label{sec:spinc}

\subsection{(Co-)homological computations}
\label{sec:cohomology}

The aim of this subsection is to compute $H^2(\Wo;\Z)$, in order
to understand \spinc structures on $\Wo$.
Consider the Mayer--Vietoris long exact sequence in cohomology
associated to $W = \Wo \cup_Q N$. When we do not specify it,
we assume that we are using $\Z$ coefficients.

\begin{center}
  \begin{tabular}{ c || c | c c c | c }
          & $W$  & $\Wo$ & $\sqcup$ & $N$  & $Q$ \\ \hline
    $H^0$ & $\Z$ & $\Z$  & $\oplus$ & $\Z$ & $\Z$ \\
    $H^1$ & $0$  & $0$   & $\oplus$ & $\Z^{h-1}$ & $\Z^{h-1}$ \\
    $H^2$ & $\Z$ & ?     & $\oplus$ & $\Z/2\Z$ & $\Z^{h-1} \oplus T$ \\
    $H^3$ & $0$  & $\Z$  & $\oplus$ & $0$ & $\Z$ \\
  \end{tabular}
\end{center}

The cohomology of $W$ can be easily obtained by recalling that $W$ is
constructed by attaching a 2-handle on a $B^4$. The cohomology of $N$ is also straightforward,
since $N$ deformation retracts on $\Fh = (\RP^2)^{\# h}$.
As for $Q$, its cohomology can be deduced from~\cite[Lemma 2.1]{LRS},
and it is written in the table above. $T$ is the torsion subgroup of
$H_1(Q)$, which is, according to~\cite[Lemma 2.1]{LRS},
\[
T = \begin{cases}
\Z/2\Z \oplus \Z/2\Z & \mbox{if $e(\Fh)$ is even,}\\
\Z/4\Z & \mbox{if $e(\Fh)$ is odd.}\\
\end{cases}
\]
In both cases, the map $H^2(N) \cong \Z/2\Z \to T$ is non-trivial.
From the cohomology groups that we already know (and the fact that
the map $H^1(N) \to H^1(Q)$ is an isomorphism)
we can deduce almost all the cohomology groups of $\Wo$. $H^2(\Wo)$ will depend on the parity of $e(\Fh)$, according to the following lemma.

\begin{lemma}
\label{lem:H2Wo}
We have that
\[
H^2(\Wo) = \begin{cases}
\Z^{h} \oplus \Z/2\Z & \mbox{if $e(\Fh)$ is even,}\\
\Z^{h} & \mbox{if $e(\Fh)$ is odd.}\\
\end{cases}
\]
\end{lemma}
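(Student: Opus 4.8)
The plan is to reduce the statement to a computation of $H_1(\Wo;\Z)$, which I then feed into the universal coefficient theorem. Running the $\Z$--coefficient Mayer--Vietoris sequence of $W=\Wo\cup_Q N$ with the groups in the table, and using that $H^1(N)\to H^1(Q)$ is an isomorphism (so the connecting map $H^1(Q)\to H^2(W)$ vanishes, hence $H^2(W)\to H^2(\Wo)\oplus H^2(N)$ is injective) together with $H^3(W)=0$ (so $H^2(\Wo)\oplus H^2(N)\to H^2(Q)$ is onto), one gets a short exact sequence
\[
0\to\Z\to H^2(\Wo)\oplus\Z/2\Z\to\Z^{h-1}\oplus T\to 0.
\]
Tensoring this with $\Q$ gives $b_2(\Wo)=h$; moreover $H^1(\Wo)=0$ forces $H_1(\Wo;\Z)$ to be finite, and $H^3(\Wo)=\Z$ forces $H_2(\Wo;\Z)$ to be torsion--free, so $H_2(\Wo;\Z)\cong\Z^h$. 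Universal coefficients then give $H^2(\Wo;\Z)\cong\Z^h\oplus H_1(\Wo;\Z)$, and it remains to prove that $H_1(\Wo;\Z)$ is $\Z/2\Z$ when $e(\Fh)$ is even and $0$ when $e(\Fh)$ is odd.

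I would read $H_1(\Wo;\Z)$ off the homology Mayer--Vietoris sequence of $W=\Wo\cup_Q N$. Since $H_2(N)=0$ and $H_1(W)=0$, its relevant part is
\[
H_2(W)\xrightarrow{\ \partial\ }H_1(Q)\longrightarrow H_1(\Wo)\oplus H_1(N)\longrightarrow 0,
\]
so $H_1(\Wo)\oplus\Z^{h-1}\oplus\Z/2\Z\cong(\Z^{h-1}\oplus T)/\langle z\rangle$ with $z:=\partial(\mu)$, $\mu$ a generator of $H_2(W)\cong\Z$. Comparing ranks forces $z$ to be torsion, so $z\in T$. To pin down $z$ modulo $2$, represent $\mu$ by a surface $S$ — a Seifert surface of $K$ capped off with the core of the $2$--handle — transverse to $\Fh$; then $\partial(\mu)$ is the class in $H_1(Q)$ of $S\cap Q$, a disjoint union of $\#(S\cap\Fh)$ fibres of the circle bundle $Q\to\Fh$. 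Hence the image of $z$ in $H_1(Q;\Z/2\Z)$ equals $\big(\#(S\cap\Fh)\bmod2\big)$ times the reduction of the fibre class, and $\#(S\cap\Fh)\equiv[S]\cdot[\Fh]\equiv[\Fh]^2\equiv e(\Fh)\pmod2$ — here $[S]$ and $[\Fh]$ agree mod $2$ because both surfaces run once over the $2$--handle, and $[\Fh]^2\bmod2$ is the mod $2$ self--intersection of $\Fh$ in $W$.

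When $e(\Fh)$ is odd this is already enough, with no further geometric input: $z\in T=\Z/4\Z$, and neither $z=0$ nor $\operatorname{ord}(z)=4$ is compatible with the displayed isomorphism (the first would require $\Z/4\Z\cong H_1(\Wo)\oplus\Z/2\Z$, the second would annihilate the $\Z/2\Z$ summand), so $\operatorname{ord}(z)=2$ and $H_1(\Wo;\Z)=0$. When $e(\Fh)$ is even we get $z\equiv0\pmod2$ from the above, hence $z\in 2H_1(Q;\Z)\cap T=2T=0$ since $T\cong(\Z/2\Z)^2$; thus $z=0$ and $H_1(\Wo;\Z)\cong\Z/2\Z$. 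Substituting into $H^2(\Wo;\Z)\cong\Z^h\oplus H_1(\Wo;\Z)$ proves the lemma. The only step that is genuinely geometric rather than diagram--chasing is the even case: one needs a capped Seifert surface to meet $\Fh$ in an even number of points, i.e.\ $[\Fh]^2\equiv e(\Fh)\equiv0\pmod2$, which uses that $e=e(F)$ is even, together with the standard identification of the Mayer--Vietoris connecting map with ``intersect with $\Fh$ and take the linking circles''. (One can instead argue entirely in cohomology, but then one must decide whether a certain extension of finitely generated abelian groups splits off its torsion, which requires the same input.)
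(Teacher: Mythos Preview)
Your proof is correct and close in spirit to the paper's, but organized differently. The paper distinguishes the two possible extensions by computing the $\F_2$-rank of $H^2(\Wo;\F_2)$ directly from the $\F_2$-coefficient Mayer--Vietoris sequence; you instead invoke the universal coefficient theorem to reduce to the torsion of $H_1(\Wo;\Z)$ and then run the integral homology Mayer--Vietoris. Both arguments ultimately rest on the same geometric identification of the connecting homomorphism with ``$e(\Fh)$ times the fibre class'' read modulo $2$. Your handling of the odd case is in fact slightly cleaner than the paper's: a pure cancellation argument in finitely generated abelian groups shows that $z\in T\cong\Z/4\Z$ must have order exactly $2$, whereas the paper appeals to the external fact that the fibre class is divisible by $2$ in $H_1(Q;\Z)$ when $e(\Fh)$ is odd to conclude that the $\F_2$ connecting map vanishes. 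In the even case the two arguments coincide.
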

\begin{proof}
From the long exact sequence above we have an exact sequence
\[
0 \to \Z \to H^2(\Wo) \to \Z^{h-1} \oplus \Z/2\Z \to 0,
\]
regardless of the parity of $e(\Fh)$. The two possible extensions
are $\Z^{h} \oplus \Z/2\Z$ and $\Z^{h}$. If we consider the reduction
modulo 2, which maps $H^2(\Wo)$ to $H^2(\Wo;\F_2)$, the two possible
extensions are mapped respectively to $\F_2^{h+1}$ and $\F_2^h$.
Therefore, in order to understand $H^2(\Wo)$, it is enough to determine
the rank of $H^2(\Wo; \F_2)$.

Consider the Mayer--Vietoris long exact sequence in homology associated
to $W = \Wo \cup_Q N$, with $\F_2$ coefficients. Since the coefficient
ring is a field, homology and cohomology are dual to each other.
As in the previous case, the homologies of $W$ and $N$ are quite straightforward to compute.
According to~\cite[Proof of Lemma 2.1]{LRS}, $H_1(Q;\F_2) \cong H_2(Q;\F_2) \cong \F_2^{h+1}$
if $e(\Fh)$ is even, and $H_1(Q;\F_2) \cong H_2(Q;\F_2) \cong \F_2^{h}$ if $e(\Fh)$ is odd.

\begin{center}
  \begin{tabular}{ c || c | c c c | c }
          & $Q$           & $\Wo$  & $\sqcup$ & $N$      & $W$ \\ \hline
    $H_3$ & $\F_2$        & $\F_2$ & $\oplus$ & $0$      & $0$ \\
    $H_2$ & $H_2(Q;\F_2)$ & ?      & $\oplus$ & $\F_2$   & $\F_2$ \\
    $H_1$ & $H_1(Q;\F_2)$ & ?      & $\oplus$ & $\F_2^h$ & $0$ \\
    $H_0$ & $\F_2$        & $\F_2$ & $\oplus$ & $\F_2$   & $\F_2$ \\
  \end{tabular}
\end{center}
Consider the connecting morphism
\[
\de \colon H_2(W;\F_2) \to H_1(Q;\F_2).
\]
If $\alpha = [\Fh]$ is the generator of $H_2(W;\F_2) \cong \F_2$ and $\gamma$ is the class of a circle fibre in $H_1(Q;\F_2)$, then
\[
\de\alpha = e(\Fh) \cdot \gamma \in H_1(Q; \F_2).
\]
It follows that the map $\de$ is trivial when $e(\Fh)$ is even.

The map $\de \colon H_2(W;\F_2) \to H_1(Q;\F_2)$ is trivial also
when $e(\Fh)$ is odd. This is because ---according to~\cite[Lemma 2.1]{LRS}---
when $e(\Fh)$ is odd the homology class of the fibre
is divisible by 2 in $H_1(Q)$,
so its reduction modulo $2$ is $0$.

From this we deduce that $H_2(\Wo;\F_2) \cong \F_2^h$ if $e(\Fh)$ is odd, and $H_2(\Wo;\F_2) \cong \F_2^{h+1}$ if $e(\Fh)$ is even, and hence we conclude the proof of the lemma.
\end{proof}

\subsection{Intersection form}
In this section we study the intersection forms on $H_2(W)$ and $H^2(W)$.
\begin{lemma}
The intersection form $Q_W$ on $H_2(W) \cong \Z$ is given by $Q_W = (-n)$.
The intersection form $Q^W$ on $H^2(W) \cong \Z$ is given by $Q^W = (-\frac{1}{n})$.
\end{lemma}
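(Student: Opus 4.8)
The plan is to compute both intersection forms directly from the handle decomposition of $W$. Recall that $W$ is obtained from $B^4$ by attaching a single $2$-handle along $K$ with framing $-n$; hence $H_2(W;\Z)\cong\Z$, generated by the class $[\Sigma]$ of the Seifert surface of $K$ capped off with the core of the $2$-handle. By the standard description of the intersection form on a $4$-manifold built from a single $2$-handle, the self-intersection $[\Sigma]\cdot[\Sigma]$ equals the framing coefficient, namely $-n$. This immediately gives $Q_W=(-n)$.

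For the cohomological intersection form, the point is that $Q^W$ is the form on $H^2(W;\Z)$ obtained by restricting to the free part and inverting the matrix of $Q_W$ with respect to dual bases, or equivalently the form induced on the image of $H^2(W;\Z)$ under the map to $H^2(W;\Q)\cong H_2(W;\Q)^*$. First I would note that $H^2(W;\Z)\cong\Z$ (read off from the table, or from the long exact sequence of the pair $(W,\partial W)$), generated by a class $\phi$ that is Poincaré--Lefschetz dual to a relative class. Under the isomorphisms $H^2(W;\Q)\cong H^2(W,\partial W;\Q)$ (valid rationally since $H^*(\partial W;\Q)$ is that of $S^3$ in the relevant degrees, $Y=S^3_{-n}(K)$ being a rational homology sphere) and then Poincaré--Lefschetz duality $H^2(W,\partial W;\Q)\cong H_2(W;\Q)$, the generator $\phi$ corresponds to $\frac1{-n}[\Sigma]$, because the matrix of the map $H_2(W;\Z)\to H^2(W,\partial W;\Z)\cong H_2(W,\partial W,\Z)^\vee$ given by the intersection form is $(-n)$, so its rational inverse is $(-\frac1n)$. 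Pairing $\phi$ with itself using the cup-product-then-evaluate-on-fundamental-class recipe therefore yields $Q^W=\left(-\frac1n\right)$.

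Concretely, I would organise the argument as follows: (1) identify the generator of $H_2(W;\Z)$ and compute $Q_W=(-n)$ from the framing; (2) use the long exact sequence of the pair $(W,\partial W)$ together with the fact that $Y$ is a rational homology sphere to identify $H^2(W;\Z)\cong\Z$ and to see that, rationally, $H^2(W;\Q)$, $H^2(W,\partial W;\Q)$ and $H_2(W;\Q)$ are all identified; (3) observe that under these identifications the intersection form $Q^W$ is exactly the inverse matrix of $Q_W$, hence $Q^W=(-\frac1n)$. Alternatively, and perhaps more cleanly, one can simply invoke that for a rational homology sphere boundary the form on $H^2(W;\Q)$ is minus--dual (inverse) to the form on $H_2(W;\Q)$, a standard fact.

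The main obstacle is a bookkeeping one rather than a conceptual one: making sure the duality isomorphisms are set up so that the sign and the reciprocal come out correctly, i.e. that $Q^W$ really is the honest inverse of $Q_W$ and not its negative or its transpose-inverse. This amounts to carefully tracking Poincaré--Lefschetz duality and the connecting maps in the long exact sequence of $(W,\partial W)$ with rational coefficients; once that is pinned down, both equalities $Q_W=(-n)$ and $Q^W=(-\frac1n)$ are immediate.
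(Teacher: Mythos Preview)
Your proposal is correct and follows essentially the same approach as the paper: for $Q_W$ you read off $(-n)$ from the framing, and for $Q^W$ you use the long exact sequence of the pair $(W,\partial W)$ together with the fact that $Y$ is a rational homology sphere to identify the map $H_2(W)\to H^2(W)$ rationally and conclude that $Q^W$ is the inverse matrix $(-\tfrac{1}{n})$. The paper's version is slightly more concrete---it explicitly notes that the generator of $H_2(W)$ maps to $n$ times the generator of $H^2(W)$---but the content is the same.
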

\begin{proof}
The intersection form on $H_2(W)$ is $(-n)$ because the 4-manifold $W$
is obtained by attaching a $(-n)$-framed 2-handle to $B^4$.

The intersection form on $H^2(W)$ can be worked out by considering the
following portion of the long exact sequence in homology associated to
the couple $(W, Y)$, where $Y= S^3_{-n}(K)$:
\[
0 \to H_2(W) \to H^2(W) \to H_1(Y) \to 0.
\]
Such a short exact sequence is isomorphic to
\[
0 \to \Z \to \Z \to \Z/n\Z \to 0.
\]
The generator of $H_2(W)$ is mapped to $n$ times the generator
of $H^2(W)$, so the intersection form on $H^2(W)$ is represented
by the matrix $(-\frac{1}{n})$.
\end{proof}

It is also worth noting that for each $c \in H^2(W)$, $c|_{\Wo}$ restricts to a torsion \spinc structure on both boundary components, and therefore it makes sense to consider its square.
We claim that:
\begin{equation}
\label{eq:intersection}
Q^{\Wo}(c|_{\Wo}) = Q^W(c).
\end{equation}
Indeed, the class $nc \in H^2(W)\cong H_2(W,Y)$ is in the image of the map $H_2(W)\to H_2(W,Y)$.
Now, the map $\iota \colon H_2(\Wo) \to H_2(W)$ is surjective: this comes from the  Mayer--Vietoris sequence for $W = \Wo\cup N$, since the connecting morphism $\partial: H_2(W)\to H_1(Q)$ vanishes (see the proof of Lemma~\ref{lem:H2Wo}) and $H_2(N) = 0$.

Therefore, there is an element $d \in H_2(\Wo)$ such that $\iota(d) = nc$.
The elements $d$ and $nc$ can be represented by some copies of a surface $S \subseteq \Wo$.
The squares $Q^{\Wo}(d)$ and $Q^W(nc)$ can be computed as the algebraic self-intersection $S\cdot S$ of $S$,
which in turn can be computed in an arbitrarily small neighbourhood of $S$.

\subsection{Spin$^c$ structures}

Recall (see Remark~\ref{rem:label}) that \spinc structures on $-W$
are labelled by integers as follows:
\[
\scal{c_1(\s_k)}{[\Sigma]} = 2k + n.
\]
By symmetry we also get a labelling for $\Spinc(W)$, and we still
denote the \spinc structures on $W$ by $\s_k$.
$\s_k$ and $\s_{k'}$ restrict to the same \spinc structure on $Y$
if and only if $n \,|\, (k-k')$. In such a case we denote the restriction
to $Y$ by $\t_{k} = \t_{k'}$.

It is worth noting that we have isomorphisms $H^2(W) \cong \Z$ and
$H^2(Y) \cong \Z/n\Z$ such that, under these identifications, the
restriction map is the usual projection $\Z \to \Z/n\Z$, and $c_1(\t_{k}) \equiv 2k\pmod n$.

In order to apply Theorem~\ref{thm:OSz}, we need a \spinc structure
on the cobordism $\Wo$ that restricts to a torsion \spinc structure
on $Q$. Therefore, we introduce the following notation:

\begin{definition}
Given a 4-manifold $X$, we define $\Spinctor(X)$ to be the subset of
$\Spinc(X)$ of elements that restrict to torsion \spinc structures
on $\de X$.
\end{definition}

Notice that in our case $\Spinctor(\Wo)$ is given by all \spinc
structures that restrict to torsion \spinc structures on $Q$,
because all \spinc structures on $Y$ are already torsion.
We will now give a classification of $\Spinctor(\Wo)$ in the case
of $e(\Fh)$ odd (or, equivalently, $n$ odd).

%
%
%

%
%
%
%
%

\subsection{The case $e(\Fh)$ odd}

By Lemma~\ref{lem:H2Wo} we have that $H^2(\Wo) \cong \Z^h$.
From the Mayer--Vietoris exact sequence associated to $W = \Wo \cup_Q N$ we find:
\[
\xymatrix{
0 \ar[r] & H^2(W) \ar[r] \ar[d]^\vsimeq & H^2(\Wo) \oplus H^2(N) \ar[r] \ar[d]^\vsimeq & H^2(Q) \ar[r] \ar[d]^\vsimeq & 0 \\
0 \ar[r] & \Z \ar[r]^-{\a} & \Z^h \oplus \Z/2\Z \ar[r]^-{\b} & \Z^{h-1} \oplus \Z/4\Z \ar[r] & 0 
}
\]
We have that $\a(1) = (c, 1)$ for some nonzero $c \in \Z^h$,
otherwise the quotient would contain a $\Z/2\Z$ summand.
Then we have that
\[
\Z^{h-1} \oplus \Z/4\Z \cong \frac{\Z^h \oplus \Z/2\Z}{\spn{(c,1)}} \cong \frac{\Z^h }{\spn{2c}}.
\]
This implies that $c = 2d$, where $d \in \Z^h$ is a primitive element.
We denote by $x \in H^2(\Wo)$ the element that corresponds to $d$, and
we let $\A = \spn{x} \subseteq H^2(\Wo)$. Therefore, $\Spinctor(\Wo)$
is an affine space over $\A$.

It follows from the exact sequence above that the image of the map
\[
\Spinc(W) \to \Spinc(\Wo)
\]
is contained inside $\Spinctor(\Wo)$. Moreover, the map is modelled
on the map
\[
H^2(W) \cong \Z \xrightarrow{\cdot2} \Z \cong \A.
\]
It follows from the naturality of the first Chern class that $c_1(\s_k|_{{\Wo}}) = (2n + 4k)x$:
\[
\xymatrix{
\Spinc(W) \ar[r] \ar[d]^{c_1} & \Spinctor(\Wo) \ar[d]^{c_1} \\
n + 2\Z \ar[r]^-{\cdot2} & 2n + 4\Z \subseteq \A\\
}
\]
The Chern classes of all \spinc structures in $\Spinctor(\Wo)$
form the subset $2n + 2\Z = 2\Z \subseteq \Z \cong \A$.
This motivates the following definition.

\begin{definition}
We define $\rs_k \in \Spinctor(\Wo)$ to be the \spinc structure
on $\Wo$ that restricts to a torsion \spinc structure on $Q$ and
that satisfies
\[
c_1(\rs_k) = (2n + 2k) x.
\]
\end{definition}

\begin{remark}
\label{rem:restrictiontoQ}
It follows from the computations above that
\begin{align*}
\Spinc(W) &\to \Spinctor(\Wo)\\
\s_k &\mapsto \rs_{2k}
\end{align*}
and that $\rs_k \in \Spinctor(\Wo)$ extends to a \spinc structure on $W$ if and only
if $k$ is even.
\end{remark}

We now want to understand the restriction of the \spinc structure $\rs_k$ to $Y$.
This is done in the following lemma. Instead of $W$, we use $W_n=-W$ and $S^3_n(\mK)=-Y$ to label
the \spinc structure, so we can stick to the usual positive surgery conventions.

\begin{lemma}
\label{lem:restrictiontoY}
For all $k \in \Z$ we have that
\[
\rs_{2k}|_{S^3_n(\mK)} = \t_k \,\,\, \mbox{and} \,\,\, \rs_{n+2k}|_{S^3_n(\mK)}=\t_k.
\]
\end{lemma}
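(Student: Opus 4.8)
The plan is to track the first Chern class of $\rs_{2k}$ and $\rs_{n+2k}$ under the restriction $H^2(\Wo) \to H^2(Q)$ and then along the inclusion of $Q$ back into $-W$ (equivalently, to compare with the labelling of $\Spinc(-W)$ established in Remark~\ref{rem:label}), and to use the fact that $\rs_{2k}$ already extends over $W$ by Remark~\ref{rem:restrictiontoQ} so that one can compute its restriction to $Y=S^3_{-n}(K)$ directly on $W$.

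First I would handle $\rs_{2k}$. Since $k\mapsto 2k$ is exactly the map $\Spinc(W)\to\Spinctor(\Wo)$ sending $\s_k\mapsto\rs_{2k}$ (Remark~\ref{rem:restrictiontoQ}), the \spinc structure $\rs_{2k}$ is the restriction of $\s_k\in\Spinc(W)$. Restricting further to the boundary component $Y$ we get $\s_k|_Y=\t_k$ by the definition of $\t_k$ in Section~\ref{sec:construction}; passing to $-Y = S^3_n(\mK)$ via the canonical identification $\Spinc(Y)=\Spinc(-Y)$ (which commutes with restriction) gives $\rs_{2k}|_{S^3_n(\mK)}=\t_k$. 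This first half is essentially bookkeeping once one is careful about which surgery coefficient and which orientation one is using.

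The second half, $\rs_{n+2k}|_{S^3_n(\mK)}=\t_k$, is the substantive point, because $\rs_{n+2k}$ does \emph{not} extend over $W$ when $n$ is odd (here $e(\Fh)$ odd forces $n$ odd), so I cannot argue purely inside $W$. The key observation is that $\rs_{n+2k}$ and $\rs_{2k}$ differ by $n x\in\A$, and I want to show this difference dies on $Y$. Concretely: $c_1(\rs_{n+2k}) - c_1(\rs_{2k}) = 2n x$, so $\rs_{n+2k} = \rs_{2k} + n x$ as elements of the affine space $\Spinctor(\Wo)$ over $\A=\spn x$. Restricting to $Y$, the difference is $n\cdot(x|_Y)$; I would show $x|_Y$ generates $H^2(Y;\Z)\cong\Z/n\Z$ (this follows from the commutative diagram relating the Mayer--Vietoris sequence for $W=\Wo\cup_Q N$ to the restriction maps, together with the identification of $H^2(W)\to H^2(Y)$ with $\Z\xrightarrow{\bmod n}\Z/n\Z$ and the fact that $H^2(W)\xrightarrow{\cdot 2}\A$), whence $n\cdot(x|_Y)=0$ and $\rs_{n+2k}|_Y = \rs_{2k}|_Y$. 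The main obstacle, and the place requiring the most care, is precisely pinning down the generator $x|_Y$ of $H^2(Y)$ and checking the sign/orientation conventions are consistent between the $(-n)$-surgery picture on $W$ and the $n$-surgery picture on $-W$; once that is in place, the equality $\rs_{n+2k}|_{S^3_n(\mK)}=\rs_{2k}|_{S^3_n(\mK)}=\t_k$ follows, completing the lemma. One should also double-check the index conventions mod $n$, using that $\t_k$ and $\t_{k'}$ agree when $n\mid(k-k')$, so that the two displayed equalities are compatible and consistent with the $\Z/n\Z$ labelling of $\Spinc(S^3_n(\mK))$.
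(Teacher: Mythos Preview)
Your argument is correct and very close to the paper's. For $\rs_{2k}$ you do exactly what the paper does: use that $\rs_{2k}$ extends to $\s_k$ on $W$ and restrict to $Y$. For $\rs_{n+2k}$ your route differs slightly: you argue via the affine difference $\rs_{n+2k}=\rs_{2k}+nx$ and kill $nx$ on $Y$ using that $H^2(Y)\cong\Z/n\Z$, whereas the paper computes $c_1(\rs_{n+2k}|_{S^3_n(\mK)})$ directly from the commutative triangle relating $H^2(W_n)$, $\A$, and $H^2(S^3_n(\mK))$, and then uses that $c_1$ is injective on $\Spinc(S^3_n(\mK))$ (since $n$ is odd). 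Both arguments rest on the same diagram chase; yours has the small advantage that it does not appeal to injectivity of $c_1$, and in fact you do not even need $x|_Y$ to generate $H^2(Y)$ --- it is enough that $H^2(Y)$ is $n$-torsion, so $n\cdot(x|_Y)=0$ holds automatically and that step can be shortened.
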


\begin{proof}
Consider the following commutative diagram:
\[
\xymatrix{
H^2(W_n) \ar[rr] \ar[dr]_-{\pi} & & \A \ar[ld]^-{r} & & \Z \ar[rr]^{\cdot2} \ar[dr]_-{\pi} & & \Z \ar[ld]^-{r} \\
 & H^2(S^3_n(\mK)) & & & & \Z/n\Z & \\
}
\]
Recall that we chose isomorphisms $H^2(W_n)\cong \Z$ and $H^2(S^3_n(\mK)) \cong \Z/n\Z$ such that $\pi(1) = 1 \in \Z/n\Z$.
Then
\[
c_1(\t_k) = \pi(c_1(\s_k)) = n+2k = 2k.
\]
Since $n$ is odd, $2$ is invertible modulo $n$, so every \spinc
structure on $S^3_n(\mK)$ is determined by its first Chern class.

By the naturality of the Chern class we have that for every $k \in \Z$,
the following diagram commutes:
\[
\xymatrix{
c_1(\s_k) \ar@{|->}[rr] \ar@{|->}[dr]_-{\pi} & & c_1(\rs_{2k}) \ar@{|->}[ld]^-{r} & & n+2k \ar@{|->}[rr]^{\cdot2} \ar@{|->}[dr]_-{\pi} & & 2n+4k \ar@{|-->}[ld]^-{r} \\
 & c_1(\t_k) & & & & 2k & \\
}
\]
From this, we obtain that $\rs_{2k}|_{S^3_n(\mK)} = \t_k$.

For the case of $\rs_{n+2k}$, recall that $c_1(\rs_{n+2k}) = 4n+4k$.
From the commutativity of the diagram below we deduce that $\rs_{n+2k}|_{S^3_n(\mK)} = \t_k$.
\[
\begin{gathered}[b]
\xymatrix{
2n+2k \ar@{|->}[rr] \ar@{|->}[dr]_-{\pi} & & 4n+4k \ar@{|-->}[ld]^-{r}\\
 & 2k & \\
}\\[-\dp\strutbox]
\end{gathered}
\qedhere
\]
\end{proof}

\subsection{The case $e(\Fh)$ even}
\label{sec:even}

When $e(\Fh)$ is even, $H^2(\Wo) \cong \Z^h \oplus \Z/2\Z$ by Lemma~\ref{lem:H2Wo}.
One can check that $\Spinctor(\Wo)$ is an affine space over a submodule
\[
\Z \oplus \Z/2\Z \subseteq \Z^h \oplus \Z/2\Z,
\]
where the $\Z$ summand is generated by a primitive element $x$.
One can then define
\[
\rs_k := \s_k|_{\Wo} \in \Spinctor(\Wo),
\]
and, if $\g$ denotes the generator of the $\Z/2\Z$ summand,
\[
\td{\rs_k} := \rs_k +\g \in \Spinctor(\Wo).
\]
One can check that $\td{\rs_k}$ restricts to $Q$ to a non-extendible
\spinc structure $\td\t$, and to $Y$ to the \spinc structure $\t_{k +\frac{n}{2}}$.
Moreover, we have that
\[
c_1(\td{\rs_k})^2 = - \frac{(n+2k)^2}{n}.
\]
Note that $n$ is even because so is $e(\Fh)$, so $k+\frac{n}{2}$ is an integer.

\section{A bound for the non-orientable slice genus}
\label{sec:bound}

We now prove Theorem~\ref{t:main}, that we restate here.
Recall that we have defined $\ours(K)$ to be the quantity $\min_{m\ge 0}\{m+2V_m(\overline K)\}$.

{\renewcommand{\thethm}{\ref{t:main}}
\begin{thm}
For every knot $K$ in $S^3$,
{
\renewcommand{\theequation}{\ref{e:us}}
\begin{equation}
\gamma_4(K) \ge \frac{\sigma(K)}2 - \ours(K).
\end{equation}
\addtocounter{equation}{-1}
}
\end{thm}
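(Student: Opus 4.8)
The plan is to fix an arbitrary smooth non-orientable surface $F\hookrightarrow B^4$ with $\de F=K$, bound $b_1(F)$ from below, and then take the minimum over all such $F$. Write $h=b_1(F)$ and $e=e(F)$, recall that $e$ is even, and choose an integer $n>0$ whose parity we prescribe later. Running the construction of Section~\ref{sec:construction} produces the negative semi-definite cobordism $\Wo$ from $Q=Q_{h,e-n}$ to $S^3_{-n}(K)$; equivalently, the same oriented $4$-manifold $\Wo$ is a cobordism from $S^3_n(\mK)=-S^3_{-n}(K)$ to $-Q$, still negative semi-definite. I will apply Theorem~\ref{thm:OSz} in this second guise: then the hypothesis that $H_1(Y;\Q)\to H_1(\Wo;\Q)$ be injective holds trivially, since $Y=S^3_n(\mK)$ is a rational homology sphere, and the surgery end carries the positive-surgery labelling to which Theorem~\ref{thm:niwu} applies.

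First I would assemble the numerical inputs. By Lemma~\ref{lem:H2Wo} and Equation~\eqref{eq:intersection}, $\Wo$ is negative semi-definite with $b_2^-(\Wo)=1$: the $h-1$ ``extra'' classes in $H_2(\Wo)$ come from $\de\Wo$ and hence lie in the radical of the intersection form, while the image of $H_2(W)$ carries the form $(-n)$ (and this is the same in both parities of $e(\Fh)$). From the cohomology computations of Section~\ref{sec:cohomology} one has $b_1(\Wo)=0$ and $b_1(-Q)=h-1$. On the surgery side, Theorems~\ref{thm:symmetry} and~\ref{thm:niwu} give
\[
\underline d\bigl(S^3_n(\mK),\t_k\bigr)=d\bigl(S^3_n(\mK),\t_k\bigr)=-\frac{n-(2k-n)^2}{4n}-2\max\set{V_k(\mK),V_{n-k}(\mK)}.
\]
On the $Q$ side, since $Q$ has standard $\HFi$ one has $\underline d(-Q,\t')=d_b(-Q,\t')$ by Theorem~\ref{thm:symmetry}, and I would use the value of this $d$-invariant of the circle bundle $Q_{h,e-n}$ in the torsion \spinc structure $\t'$ arising as the restriction of $\rs_k$ (resp.\ $\td{\rs_k}$); these values are computed in~\cite{LRS} and, crucially, depend only on $h$, $e$, $n$ and the label, not on $K$.

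Next I would feed all of this into Theorem~\ref{thm:OSz}. If $n$ is odd (so $e(\Fh)$ is odd) I take $\s=\rs_{2k}$, for which $c_1(\rs_{2k})^2=-(n+2k)^2/n$ by~\eqref{eq:intersection} and $\rs_{2k}|_{S^3_n(\mK)}=\t_k$ by Lemma~\ref{lem:restrictiontoY}; if $n$ is even (so $e(\Fh)$ is even) I take $\s=\td{\rs_k}$, with $c_1(\td{\rs_k})^2=-(n+2k)^2/n$ and the restriction to the surgery as in Section~\ref{sec:even}. Writing out
\[
c_1(\s)^2+b_2^-(\Wo)\le 4\,\underline d(-Q,\t')+2\,b_1(-Q)-4\,\underline d\bigl(S^3_n(\mK),\t_k\bigr)-2\,b_1\bigl(S^3_n(\mK)\bigr)
\]
and substituting the values above, the terms quadratic in $k$ coming from $c_1(\s)^2$, from the Ni--Wu formula, and from $\underline d(-Q,\t')$ should cancel, and rearranging should leave a linear inequality of the shape
\[
e(F)\le 2\,b_1(F)+4m+8\,V_m(\mK),
\]
valid for every integer $m\ge 0$: given $m$ we choose $n$ of either parity and the label so that $m=\min\set{k,n-k}$, and then $\max\set{V_k(\mK),V_{n-k}(\mK)}=V_m(\mK)$ because $V_\bullet$ is non-increasing.

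Finally I would combine this with Batson's signature inequality~\eqref{eq:signature}, which rearranges to $e(F)\ge 2\sigma(K)-2\,b_1(F)$. Chaining the two bounds on $e(F)$ gives $2\sigma(K)-2\,b_1(F)\le 2\,b_1(F)+4m+8\,V_m(\mK)$, that is, $b_1(F)\ge \frac{\sigma(K)}2-m-2\,V_m(\mK)$; optimising over $m\ge 0$ yields $b_1(F)\ge \frac{\sigma(K)}2-\min_{m\ge0}\set{m+2V_m(\mK)}=\frac{\sigma(K)}2-\ours(K)$, and taking the minimum over $F$ proves~\eqref{e:us}. The main obstacle is the third step: carrying out the bookkeeping of \spinc structures in both parities of $e(\Fh)$ and, above all, computing the $d$-invariants of the circle bundles $Q_{h,e-n}$ precisely enough that the terms quadratic in $k$ cancel and the remaining constants assemble into exactly $4m+8V_m(\mK)$---any slip there distorts the final bound. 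Verifying that $\Wo$ is negative semi-definite with $b_2^-=1$ and that the remaining hypotheses of Theorem~\ref{thm:OSz} hold is routine given Section~\ref{sec:spinc}.
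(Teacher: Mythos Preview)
Your outline is essentially the paper's own argument, and the even-$n$ branch you sketch (using $\td{\rs_k}$, which restricts to the non-extendible torsion structure $\td\t$ on $Q$) is exactly what the paper's closing remark says also yields~\eqref{eq:GM}. The one real slip is in your odd-$n$ branch: you choose $\s=\rs_{2k}=\s_k|_{\Wo}$, which by Remark~\ref{rem:restrictiontoQ} \emph{does} extend over $W$ and therefore restricts to an \emph{extendible} torsion structure on $Q$, whereas the paper takes $\s=\rs_{n+2k}$, which does not extend and restricts to the non-extendible $\td\t$. This matters because the only input from~\cite{LRS} actually invoked is the bound $d_t(Q,\td\t)\ge\frac{e(\Fh)-2}{4}$ for the non-extendible structure; with your choice the relevant $d$-invariant is different, and you have not checked that the constants still assemble into $e\le 2h+4m+8V_m(\mK)$.

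A smaller point: your statement that a quadratic-in-$k$ contribution from $\underline d(-Q,\t')$ participates in the cancellation is off, since that invariant does not depend on $k$. In fact the paper does not rely on an exact cancellation at all: it computes $c_1(\rs_{n+2k})^2=-\tfrac{4}{n}(n+k)^2$, plugs everything into Theorem~\ref{thm:OSz}, combines with~\eqref{eq:signature}, and then \emph{optimises}, finding that the choice $n=2m+2j+1$, $k=-n\pm m$ gives exactly $h\ge\tfrac{\sigma(K)}{2}-m-2V_m(\mK)$. Replacing $\rs_{2k}$ by $\rs_{n+2k}$ in your odd-$n$ argument---or simply running only your even-$n$ branch, where you already use $\td{\rs_k}$ and hence $\td\t$---closes the gap and recovers the paper's proof.
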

\addtocounter{thm}{-1}
}
\begin{proof}
Choose an odd integer $n>0$, and let $k$ be any integer.
We denote by $[k]$ the representative for the residue class of $k$ modulo $n$ such that $0 \leq [k] < n$.
By Remark~\ref{rem:restrictiontoQ} and Lemma~\ref{lem:restrictiontoY}, the \spinc structure $\rs_{n+2k}$ restricts to a non-extendible \spinc structure on $Q$, that we denote by $\td\t$, and to $\t_{k}$ on $Y$.

We apply Theorem~\ref{thm:OSz} to the cobordism $(\Wo,\rs_{n+2k})$ turned upside down, i.e. seen as a cobordism from $(-Y,\t_k)$ to $(-Q,\td\t)$: the assumption that the map $H_1(Y;\Q)\to H_1(\Wo;\Q)$ be injective is automatically satisfied, since $Y$ is a rational homology sphere.
The inequality of Theorem~\ref{thm:OSz} then reads as follows:
\begin{equation}
\label{eq:OSz1}
c_1(\rs_{n+2k})^2 + b_2^-(\Wo) \leq 4\,\underline{d}(-Q,\td\t) + 2b_1(Q) - 4\,d(-S^3_{-n}(K), \t_{k}).
\end{equation}

We now compute each term of Equation~\eqref{eq:OSz1}. We have that $b_2^-(\Wo) = 1$ and $b_1(Q) = h-1$. Moreover,
\[
c_1(\rs_{n+2k})^2 = \left((4n+4k)x\right)^2 = -\frac{1}{4n}\cdot(4n+4k)^2 = -\frac{4}{n}\cdot(n+k)^2,
\]
where we used the fact that $Q^{\Wo}(2x,2x) = -\frac{1}{n}$.

As for the $d$-invariant of $S^3_{-n}(K)$, by Theorems~\ref{thm:symmetry} and~\ref{thm:niwu} we have
\begin{equation*}
d(-S^3_{-n}(K), \t_{k}) = d(S^3_n(\mK), \t_{k}) = -\frac{n - (2[k] - n)^2}{4n} - 2\,\max\set{\ol{V}_{[k]}, \ol{V}_{n-[k]}},
\end{equation*}
where we set $\ol{V}_i :=V_i(\mK)$.

Finally, by~\cite[Theorem 5.1]{LRS} and~Theorem~\ref{thm:symmetry} above, we have that
\[
\underline{d}(-Q,\td\t) = -d_t(Q, \td\t) = -\left(\frac{e(\Fh)-2}{4} + a\right) \leq -\frac{e-n-2}{4}.
\]

Therefore, Equation~\eqref{eq:OSz1} becomes
\begin{equation*}
-\frac{4}{n}\cdot(n+k)^2 + 1 \leq
\frac{n - (2[k] - n)^2}{n} + 8\,\max\set{\ol{V}_{[k]}, \ol{V}_{n-[k]}} - (e-n-2) + 2h-2,
\end{equation*}
which can be re-written as follows:
\begin{equation}
\label{eq:OSz2}
2h + 8\,\max\set{\ol{V}_{[k]}, \ol{V}_{n-[k]}} \geq
e - n - \frac{4(n+k)^2 - (2[k]-n)^2}{n}
\end{equation}
By combining it with Equation~\eqref{eq:signature} as in~\cite{Batson},
we obtain:
\begin{equation}
\label{eq:combo}
4h + 8\,\max\set{\ol{V}_{[k]}, \ol{V}_{n-[k]}} \geq
2\sigma(K) - n - \frac{4(n+k)^2 - (2[k]-n)^2}{n}.
\end{equation}

Given a fixed integer $m \geq 0$, it is not difficult to check
that the best bound for $h$ coming from Equation~\eqref{eq:combo}
and involving $\ol{V}_m$ is obtained by setting $n=2m+2j+1$
and $k = -n \pm m$ (where $j$ is an arbitrary non-negative integer).
The bound for $\g_4(K)$ that we obtain in this case is then:
\begin{equation}
\label{eq:GM}
\g_4(K) \geq \frac{\sigma}{2} - m - 2 \ol{V}_m.
\end{equation}
By taking the maximum over $m \geq 0$ we conclude the proof of the theorem.
\end{proof}

\begin{remark}
By setting $m=0$ in Equation~\eqref{eq:GM}, we obtain exactly Batson's
inequality~\eqref{e:batson}.
\end{remark}

\begin{remark}
\label{rem:sharpness}
For every $m \geq 0$, the bound in Equation~\eqref{eq:GM} is sharp, in the sense that for each $m$ there exists a knot $K_m$ such that $\gamma_4(K_m) = \frac{\sigma(K)}2-m-V_m(\overline K)$.
The knot $K_0 = T_{3,-4}$ exhibits that the inequality is sharp for $m=0$, as already shown by Batson~\cite{Batson}.

For $m\ge 1$, consider the torus knot $K=T_{3,-5}$, whose signature is $8$.
Since $\mK = T_{3,5}$ is a \emph{positive} torus knot, hence an
L-space knot, the invariants $V_i(T_{3,5})$ coincide with
the torsion coefficients~\cite[Corollary 7.5]{absolutely}:
\[
V_i(\mK) = \sum_{j>0} j \, a_{j+i},
\]
where
\[
\Delta_{\mK}(t) = a_0 + \sum_{j>0} a_j \left( t^j + t^{-j} \right)
\]
is the Alexander polynomial of $\mK$. One can explicitly compute that,
for $\mK = T_{3,5}$,
\[
\Delta_{T_{3,5}}(t) = t^4 - t^3 + t - 1 + t^{-1} - t^{-3} + t^{-4}.
\]
It follows that $V_1(\mK) = 1$ and that Equation~\eqref{eq:GM} for $m=1$
gives
\[
\g_4(K) \geq \frac{8}{2} - (1 + 2) = 1.
\]
Since $\mK$ bounds a Moebius band in $B^4$, as shown in Figure~\ref{fig:T35} (see also~\cite[Section 4]{Batson}),
it follows that~\eqref{eq:GM} is sharp for $m=1$.

\begin{figure}[t]
\labellist
\pinlabel {\Huge $\rightsquigarrow$} at 110 23
\pinlabel {\LARGE $=$} at 243 23
\endlabellist
\centering
\includegraphics{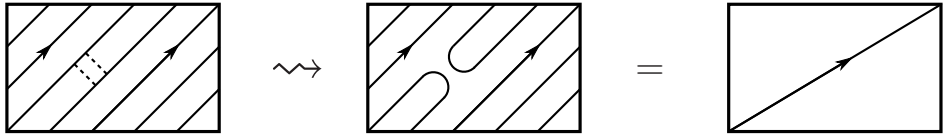}
\caption{The figure shows a non-orientable cobordism of genus 1 from $T_{3,5}$
to the unknot. The rectangle above represents a torus, which is embedded in $S^3$
in the standard way. The (unoriented) band surgery above carries $T_{3,5}$ to
the unknot.}
\label{fig:T35}
\end{figure}

For all $m > 1$, consider the knot $mK$, the connected sum of $m$ copies of $K$.
Recall from~\cite[Proposition 6.1]{BCG2} that the sequence
$\{V_i(K)\}$ satisfies the following subadditivity property:
$V_{k+l}(K\# L) \le V_k(K)\# V_l(L)$ for each pair $(k,l)$ of non-negative
integers and each pair $(K,L)$ of knots. By subadditivity of $\g_4$,
subadditivity of the $V_i$, and additivity of the signature, we obtain
\begin{align*}
m = m\g_4(K) & \geq \g_4(mK) \\
& \geq \frac{\sigma(mK)}{2} - (m + 2V_m(m\mK))\\
& \geq m \left( \frac{\sigma(K)}{2} - (1 + 2V_1(\mK)) \right) = m.
\end{align*}
It follows that all the inequalities above are actually \emph{equalities},
and that therefore~\eqref{eq:GM} is sharp for every $m \geq 1$.
\end{remark}

\begin{remark}
In the proof of Theorem~\ref{t:main} we only considered surgery with
some odd framing $n > 0$. If we considered the case of even $n$, and
applied Theorem~\ref{thm:OSz} to the torsion \spinc structure $\td{\rs_k}$
(defined in Section~\ref{sec:even}), we would have obtained exactly the
same bound as Equation~\eqref{eq:GM} for all $m \geq 0$.
\end{remark}

\section{Comparison to other bounds}
\label{sec:comparison}

In this section we study some properties of the functions $\ours$ and $\ourlim$ defined in the introduction, and discuss the relationship between the bounds given by~\eqref{e:batson},~\eqref{e:OSSz}, and~\eqref{e:us}.

\begin{proposition}\label{p:ours-properties}
The invariant $\ours$ is a concordance invariant, with values in the non-negative integers. It has the following properties:
\begin{enumerate}
\item $0 \le \ours(K) \le \min\{\nu^+(\ol K), 2V_0(\ol K)\}$;
\item $\ours(K) = 0$ if and only if $V_0(\ol K) = \nu^+(\ol K) = 0$; in particular, if $K$ is slice, $\ours(K) = 0$;
\item if there is an orientable genus-$g$ cobordism from $K_1$ to $K_2$, then $|\ours(K_1)-\ours(K_2)|\le g$;
\item if $K_+$ is obtained from $K_-$ by performing a crossing change from negative to positive, then $\ours(K_-)-1\le \ours(K_+)\le \ours(K_-)$;
\item for every two knots $K_1, K_2$, $\ours(K_1\#K_2) \le \ours(K_1) + \ours(K_2)$.
\end{enumerate}
\end{proposition}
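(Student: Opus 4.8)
The plan is to prove each of the five items essentially by unwinding the definition $\ours(K) = \min_{m\ge 0}\{m + 2V_m(\mK)\}$ and invoking known properties of the $V_i$ and of the invariants $\nu^+$ and $V_0$. First I would record the basic facts about $\{V_i\}$: for any knot $L$ the sequence $V_i(L)$ is a non-increasing sequence of non-negative integers with $V_i(L) - V_{i+1}(L) \in \{0,1\}$, it is eventually $0$, it is a concordance invariant, and $V_{\nu^+(L)}(L) = 0$ by definition of $\nu^+$. Since each $V_m(\mK) \ge 0$ and the minimand is a non-negative integer, $\ours(K) \ge 0$; concordance invariance of $\ours$ is immediate from concordance invariance of each $V_m(\mK)$ (the mirror of a concordance is a concordance).

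\smallskip
For item (1): taking $m = 0$ gives $\ours(K) \le 2V_0(\mK)$; taking $m = \nu^+(\mK)$ gives $\ours(K) \le \nu^+(\mK) + 2V_{\nu^+(\mK)}(\mK) = \nu^+(\mK)$, which is the required upper bound. Item (2): if $\ours(K) = 0$ then some $m + 2V_m(\mK) = 0$ with $m \ge 0$ and $V_m(\mK)\ge 0$, forcing $m = 0$ and $V_0(\mK) = 0$; since the $V_i$ are non-increasing and non-negative, $V_0(\mK) = 0$ forces all $V_i(\mK) = 0$, hence $\nu^+(\mK) = 0$. The converse and the ``slice'' case are immediate, and since $\sigma(K) = 0$ for slice $K$ this also justifies the parenthetical remark in the earlier ``crosscap'' discussion. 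For slice knots one uses additionally that $\sigma$ vanishes, but for the statement as given only the $V_i$-side is needed.

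\smallskip
For items (3) and (4): an orientable genus-$g$ cobordism from $K_1$ to $K_2$ mirrors to one from $\mK_1$ to $\mK_2$, and the key input is the known inequality $V_{i+g}(\mK_2) \le V_i(\mK_1) \le V_{i}(\mK_2)$ coming from such a cobordism (equivalently, from the behaviour of $V_i$ under adding a band, which shifts the index by at most one in each direction); more precisely one uses that a genus-$g$ cobordism gives maps controlling $V$ up to an index shift of $g$. From this, for any $m$ realising $\ours(K_1) = m + 2V_m(\mK_1)$ one estimates $\ours(K_2) \le (m+g) + 2V_{m+g}(\mK_2) \le m + g + 2V_m(\mK_1) = \ours(K_1) + g$, and symmetrically; I would need to be slightly careful that the index shift only costs $+g$ in the minimand and not $+2g$, so I may instead bound via $V_{m+g}(\mK_2)\le V_m(\mK_1)$ together with the crude $(m+g) - m = g$. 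Item (4) is the special case $g = 1$ of a crossing change, refined by the sign: a negative-to-positive crossing change is realised by a genus-$0$ cobordism in one direction and bounds $V$ monotonically, giving $\ours(K_+) \le \ours(K_-)$ directly and $\ours(K_-) \le \ours(K_+) + 1$ from the genus-$1$ cobordism the other way; here one uses that under such a crossing change $V_i(\mK_-) \le V_i(\mK_+) \le V_i(\mK_-) + 1$ for all $i$, which I would cite from the literature on $V_i$ and crossing changes. Item (5) follows from the subadditivity $V_{k+l}(K_1\#K_2)\le V_k(K_1)+V_l(K_2)$ (cited later in the paper as \cite[Proposition 6.1]{BCG2}, applied to the mirrors, using $\overline{K_1\#K_2} = \mK_1\#\mK_2$): if $\ours(K_i) = m_i + 2V_{m_i}(\mK_i)$, then $\ours(K_1\#K_2) \le (m_1+m_2) + 2V_{m_1+m_2}(\mK_1\#\mK_2) \le (m_1+m_2) + 2V_{m_1}(\mK_1) + 2V_{m_2}(\mK_2) = \ours(K_1)+\ours(K_2)$.

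\smallskip
The main obstacle is item (3): I need the precise statement of how $V_i$ changes under an orientable genus-$g$ cobordism (or under adding a single band), with the correct index shift, and I must make sure the cost in the minimand is exactly $g$ rather than $2g$. This is where I would be most careful to cite the right source; everything else is a short direct manipulation of the definition.
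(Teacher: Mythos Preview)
Your approach matches the paper's almost line for line: items (1), (2), (3), and (5) are handled exactly as the authors do, and your worry about item (3) is unfounded --- the inequality $V_{m+g}(\mK_2) \le V_m(\mK_1)$ (cited in the paper as \cite[Lemma 5.1]{BCG2}) costs precisely $g$ in the minimand, not $2g$, since the shift in the index $m \mapsto m+g$ contributes $g$ and the $V$-term only decreases.

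There is, however, a genuine slip in your treatment of item (4). A crossing change is \emph{not} realised by a genus-$0$ cobordism in either direction; what it gives is a genus-$1$ cobordism (smooth the double point in the trace of the homotopy), which yields only $|\ours(K_+)-\ours(K_-)|\le 1$ via item (3). The refinement by sign comes from a different source: one uses that $V_m(\mK_+) \le V_m(\mK_-)$ for all $m$ --- note that your stated inequality $V_i(\mK_-) \le V_i(\mK_+)$ is in the wrong direction and, taken literally, would prove $\ours(K_-)\le\ours(K_+)$ rather than the desired $\ours(K_+)\le\ours(K_-)$. The correct direction follows because mirroring reverses crossing signs: if $K_+$ is obtained from $K_-$ by a negative-to-positive change, then $\mK_-$ is obtained from $\mK_+$ by a negative-to-positive change, and the cited crossing-change inequality for the $V_i$ (the paper uses \cite[Theorem 6.1]{BorodzikHedden}) then gives $V_m(\mK_+) \le V_m(\mK_-)$, hence $\ours(K_+)\le\ours(K_-)$. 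The other inequality $\ours(K_-)-1\le\ours(K_+)$ then does follow from the genus-$1$ cobordism, as you say.
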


We remark here that, in particular, $\ours$, much like $\nu^+$, and by constrast with $\sigma$ and $\upsilon$, does \emph{not} induce a homomorphism from the concordance group to the integers.

\begin{proof}
The sequence $\{V_i(K)\}_i$ is a concordance invariant, hence so is $\ours$; 
moreover, the quantity $m+2V_m(\ol K)$ is a non-negative integer for each $m$, and hence so is $\ours(K)$.

\begin{enumerate}
\item When $m=0$, $m+2V_m(\ol K) = 2V_0(\ol K)$, while for $m=\nu^+(\ol K)$, $m+2V_m(\ol K) = \nu^+(K)$.
By definition, $\ours(K) \le 2V_0(\ol K)$ and $\ours(K) \le \nu^+(\ol K)$.

\item Observe that $m+2V_m(\ol K)$ is always strictly positive if $m>0$; hence, if $\ours(K) = \min_m \{m+2V_m(\ol K)\} = 0$, the minimum can only be attained at $m=0$, and in that case $V_0(\ol K) = 0$, which implies $\nu^+(\ol K) = 0$. The converse is obvious.\\
When $K$ is slice, $\nu^+(\ol K) = 0$, and hence $\ours(K)$ vanishes, too.

\item{By {\cite[Lemma 5.1]{BCG2}} we have that, under the given assumptions, $V_{m+g}(K_1) \le V_m(K_2)$ for each non-negative integer $m$. It follows that $m+g+2V_{m+g}(K_1) \le m+2V_m(K_2)+g$, hence, minimising over $m$,
\[
\ours(K_1) \le \min_{m'\ge g}\{m'+2V_{m'}(K_2)\} \le \ours(K_2)+g.
\]
Exchanging the roles of $K_1$ and $K_2$, we obtain the symmetric inequality.}

\item Observe that there is a genus-$1$ cobordism from $K_-$ to $K_+$, obtained by smoothing the double point in the trace of the crossing change homotopy. Thus, point (3) above shows that $\ours(K_-)-1\le \ours(K_+)$. Using~\cite[Theorem 6.1]{BorodzikHedden} we also obtain:
\[V_m(\overline{K_+}) \le V_{m}(\overline{K_-}),\]
from which, for each $m\ge 0$,
\[m+2V_m(\overline{K_+}) \le m+2V_m(\overline{K_-}),\]
and minimising over all values of $m$ yields the desired inequality.

\item For each $k,l$ non-negative integers, $V_{k+l}(K_1\#K_2) \le V_k(K_1) + V_l(K_2)$ by~\cite[Proposition 6.1]{BCG2}, hence
\begin{align*}
\ours({K_1}\# {K_2}) &= \min_n\{n+2V_{n}(\ol{{K_1}\#{K_2}})\}\\
&\le \min_n \min_{k+l=n}\{k+l+2V_k(\ol {K_1})+2V_l(\ol {K_2})\} \\
&= \min_k\{k+2V_k(\ol {K_1})\} + \min_l\{l+2V_l(\ol {K_2})\} \\
&= \ours({K_1}) + \ours({K_2}).\qedhere
\end{align*}
\end{enumerate}
\end{proof}

We will compare our bound with~\eqref{e:OSSz} obtained by Ozsv\'ath--Stipsicz--Szab\'o, and in order to do so we need to compare $\upsilon(K)$ with $\ours(K)$.
We say that a knot is \emph{Floer-thin} if its knot Floer homology is supported on the diagonal $i-j = -\tau(K)$.

\begin{proposition}
\label{prop:L-space}
When $K$ is a Floer-thin knot with $\tau(K)\ge 0$ or an L-space knot, then $\ours(\overline{K}) = -\upsilon(K)$ and $\ours(K) = 0$.
\end{proposition}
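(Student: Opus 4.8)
The plan is to reduce the statement to an explicit identification of the sequence $\{V_i(K)\}_i$ for L-space knots and Floer-thin knots, and then to recognise $\ours(\overline{K}) = \min_{m\ge0}\{m+2V_m(K)\}$ as the combinatorial quantity that Ozsv\'ath--Stipsicz--Szab\'o's $\upsilon$ computes in these cases. First I would recall that for an L-space knot $K$, the invariants $V_i(K)$ are the torsion coefficients of the Alexander polynomial, $V_i(K) = \sum_{j>0} j\, a_{j+i}$, as in~\cite[Corollary 7.5]{absolutely}; in particular $V_i(K)$ is a non-increasing, convex, eventually-zero sequence of non-negative integers, and $V_0(K) = g(K)$ only in degenerate cases, but more usefully the staircase description gives $V_i - V_{i+1} \in \{0,1,\dots\}$ with the sequence of differences again non-increasing. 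For Floer-thin knots with $\tau(K)\ge 0$ one has instead the well-known formula $V_i(K) = \max\{0, \lceil (\tau(K)-i)/2 \rceil\}$ (this follows, e.g., from the fact that $\CFKi$ of a Floer-thin knot is a direct sum of a staircase of length $2\tau$ and boxes, and the boxes contribute nothing to the $V_i$). These two cases will be handled in parallel once the relevant formula for $V_i$ is in hand.

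The second step is to compute $\ours(\overline{K}) = \min_{m\ge 0}\{m+2V_m(K)\}$ from these formulas and check it equals $-\upsilon(K)$. In the Floer-thin case with $\tau\ge 0$, plugging in $V_m(K) = \max\{0,\lceil (\tau-m)/2\rceil\}$, the function $m\mapsto m+2V_m(K)$ equals $m + 2\lceil(\tau-m)/2\rceil$ for $0\le m\le\tau$, which is either $\tau$ or $\tau+1$ depending on the parity of $\tau-m$, and equals $m\ge\tau$ for $m\ge\tau$; hence the minimum is exactly $\tau(K)$. On the other side, $\upsilon(K) = -\tau(K)$ for Floer-thin knots (this is in~\cite{OSSz-unoriented}), so $\ours(\overline{K}) = -\upsilon(K)$ as claimed. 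For L-space knots one argues similarly: $\upsilon(K)$ is computed from the staircase/torus-knot combinatorics, and $-\upsilon(K)$ is precisely $\min_{m\ge 0}\{m + 2V_m(K)\}$ — this is essentially the observation that the slope-$(-1)$ support line of the staircase is detected by minimising $m+2V_m$; I would cite the relevant computation of $\upsilon$ for L-space knots from~\cite{OSSz-unoriented} and match the two expressions directly.

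Finally, for the statement $\ours(K) = 0$: by Proposition~\ref{p:ours-properties}(2) it suffices to show $V_0(\overline{K}) = 0$, equivalently (since $V_0 = \nu^+$ when $V_0\le 1$, or just using Proposition~\ref{p:ours-properties}(1)) that $\nu^+(\overline{K}) = 0$. For an L-space knot $K$ with $g(K)>0$, the mirror $\overline{K}$ has $\tau(\overline{K}) = -\tau(K) < 0$, and a negative L-space knot has $V_i(\overline K) = 0$ for all $i\ge0$; for a Floer-thin knot with $\tau(K)\ge0$, the mirror has $\tau(\overline K)\le 0$ and $V_i(\overline K) = \max\{0,\lceil(\tau(\overline K)-i)/2\rceil\} = 0$ for all $i\ge 0$. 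In both cases $V_0(\overline K)=0$, so $\ours(K)=0$. (The unknot case is trivial.)

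I expect the main obstacle to be the L-space-knot half of the middle step: pinning down that $-\upsilon(K)$ for an L-space knot equals $\min_{m\ge0}\{m+2V_m(K)\}$ requires either invoking a clean closed form for $\upsilon$ of L-space knots from~\cite{OSSz-unoriented} or re-deriving it from the staircase complex, and one must be careful with conventions (sign of $\upsilon$, mirroring, and whether $V_i$ or $V_i(\overline{\;\cdot\;})$ appears). The Floer-thin case is essentially bookkeeping; the L-space case is where the real identification of invariants happens, and it is worth stating the $\upsilon$-formula being used explicitly before matching.
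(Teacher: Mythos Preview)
Your proposal is correct. The paper takes a slightly slicker route for the Floer-thin case: rather than computing $V_m(K) = \max\{0,\lceil(\tau-m)/2\rceil\}$ and minimising $m+2V_m$ by hand, it observes that a Floer-thin knot with $\tau(K)=n\ge 0$ has the same $V_i$'s and the same $\upsilon$ as the L-space knot $T_{2,2n+1}$, so the Floer-thin case reduces entirely to the L-space case. Your direct computation is perfectly fine and arguably more transparent, but the reduction saves you from doing the Floer-thin half separately. For the L-space half, the two approaches coincide: the identity $-\upsilon(K)=\min_{m\ge 0}\{m+2V_m(K)\}$ for L-space knots is exactly what is needed, and the paper cites it from Borodzik--Hedden~\cite[Proposition~4.6]{BorodzikHedden} rather than~\cite{OSSz-unoriented}; you correctly flagged this step as the crux, and using the Borodzik--Hedden reference would spare you the staircase re-derivation you anticipated. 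Your argument for $\ours(K)=0$ via $V_0(\overline K)=0$ matches the paper's.
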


In particular, the bound given by~\eqref{eq:GM} for both $K$ and $\overline K$ is at most as strong as the one given by $\upsilon$, when $K$ is an L-space knot or an alternating knot.

\begin{proof}
Recall that for a Floer-thin knot $K$ with $\tau(K) = \pm n$, we have $V_i(K) = V_i(T_{2,\pm (2n+1)})$~\cite[Equation (8)]{AG}, and hence $\ours(K) = \ours(T_{2,\pm(2n+1)})$. Analogously, it follows from~\cite[Theorem 1.14]{upsilon} that $\upsilon(K) = \upsilon(T_{2,\pm(2n+1)})$.
It follows that it is enough to prove the statement for L-space knots.

When $K$ is an L-space knot, then a direct computation from the knot Floer complex shows that $V_i(\ol K) = 0$ for every $i$; hence $\ours(K) = 0$.
On the other hand, Borodzik and Hedden have shown in~\cite[Proposition 4.6]{BorodzikHedden} that
\[
\upsilon(K) = \Upsilon_K(1) = -\min_{n} \set{n + 2V_n(K)} = -\ours(\mK),
\]
as desired.
\end{proof}

In the case of Floer-thin knots we can actually say more about $\ours$.

\begin{proposition}
\label{prop:Floer-thin}
If $K$ is a Floer-thin knot with $\tau(K)\ge0$, then we have
\[
\ours(\mK) = \ni^+(K) = \tau(K) = -\upsilon(K).
\]
If, additionally, $K$ is quasi-alternating, then $\ours(\mK) = -\sigma(K)/2$, and in this case the bounds~\eqref{e:OSSz} and
\eqref{e:us} -- applied to $K$ and $\mK$ -- yield
\[
\gamma_4(K) \geq 0.
\]
\end{proposition}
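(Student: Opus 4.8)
The plan is to reduce Proposition~\ref{prop:Floer-thin} to the already-established Proposition~\ref{prop:L-space} and then chase the equalities. First I would invoke the fact that a Floer-thin knot $K$ with $\tau(K)\ge 0$ has the same $V_i$'s as the torus knot $T_{2,2\tau(K)+1}$ (again by~\cite[Equation (8)]{AG}), so all four quantities $\ours(\mK)$, $\ni^+(K)$, $\tau(K)$, and $-\upsilon(K)$ depend only on this torus knot and it suffices to verify the chain of equalities for $K=T_{2,2n+1}$ with $n=\tau(K)\ge 0$. For such a knot one computes directly from the (well-known) staircase complex that $V_i(T_{2,2n+1}) = \max\{n-i,0\}$ for $i \ge 0$; hence $\min_i\{i+2V_i(K)\} = \min_{0\le i\le n}\{i + 2(n-i)\} = \min_{0\le i\le n}\{2n-i\}= n$, giving $\ours(\mK) = n = \tau(K)$ by Borodzik--Hedden's identity $\upsilon(K) = -\min_i\{i+2V_i(K)\}$ quoted in the proof of Proposition~\ref{prop:L-space}, which simultaneously yields $-\upsilon(K) = n$. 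Since $V_n(K) = 0$ and $V_{n-1}(K) = 1 \ne 0$, the definition of $\ni^+$ gives $\ni^+(K) = n$ as well. This settles the first display.

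For the quasi-alternating refinement, I would recall that a quasi-alternating knot is Floer-thin and, more importantly, that for such knots $\tau(K) = -\sigma(K)/2$ (this is the standard fact that $\tau$ equals $-\sigma/2$ for quasi-alternating, or alternating, knots, going back to Ozsv\'ath--Szab\'o). Combining this with the first part gives $\ours(\mK) = \tau(K) = -\sigma(K)/2$ immediately. It then remains to plug this into the two genus bounds. For~\eqref{e:us} applied to $K$ we use $\ours(K) = 0$ (from Proposition~\ref{prop:L-space}, since a Floer-thin knot with $\tau\ge0$ is covered there), so~\eqref{e:us} reads $\gamma_4(K) \ge \sigma(K)/2 - 0 = \sigma(K)/2 \le 0$, which is vacuous but consistent; for~\eqref{e:us} applied to $\mK$ we have $\sigma(\mK) = -\sigma(K)$ and $\ours(\mK) = -\sigma(K)/2$, so~\eqref{e:us} reads $\gamma_4(\mK) = \gamma_4(K) \ge -\sigma(K)/2 - (-\sigma(K)/2) = 0$. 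Similarly~\eqref{e:OSSz} applied to $K$ gives $\gamma_4(K) \ge |\sigma(K)/2 - \upsilon(K)| = |\sigma(K)/2 + \tau(K)| = |\sigma(K)/2 - \sigma(K)/2| = 0$. Either way we land on $\gamma_4(K) \ge 0$.

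The only genuinely delicate point is the computation $V_i(T_{2,2n+1}) = \max\{n-i,0\}$ and the identification $\tau = -\sigma/2$ for quasi-alternating knots; both are standard but should be cited rather than reproved — the former follows from~\cite{absolutely,rasmussenVi} or can be read off the torsion-coefficient formula already used in Remark~\ref{rem:sharpness} together with $\Delta_{T_{2,2n+1}}(t) = \sum_{j=-n}^{n}(-1)^{j}t^{j}$, and the latter is classical. I do not expect any obstacle beyond bookkeeping: once Proposition~\ref{prop:L-space} is in hand, everything here is a short unwinding of definitions together with the additivity/mirror behaviour of $\sigma$, $\upsilon$, and $\gamma_4$.
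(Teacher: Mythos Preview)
Your overall strategy is sound and reaches the right conclusions, but there is a computational slip: the formula $V_i(T_{2,2n+1}) = \max\{n-i,0\}$ is incorrect. Running the torsion-coefficient formula from Remark~\ref{rem:sharpness} with the (correctly normalised) polynomial $\Delta_{T_{2,2n+1}}(t) = \sum_{j=-n}^{n}(-1)^{n-j}t^{j}$ gives
\[
V_i(T_{2,2n+1}) = \left\lceil \frac{n-i}{2} \right\rceil \qquad (0\le i\le n);
\]
for instance $V_0(T_{2,5}) = 1$, not $2$. Fortunately your conclusions survive: with the correct values one still has $i + 2V_i \ge n$ for all $i$, with equality at $i=n$, and $V_{n-1} = 1 \neq 0$, so $\ours(\mK) = \nu^+(K) = n$ as claimed. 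You should either fix the formula or bypass it.

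The paper's proof is more economical on exactly this point: rather than reducing to $T_{2,2n+1}$ and computing the $V_i$ explicitly, it cites \cite[Equation~(8)]{AG} directly for the statement that, for a Floer-thin knot, the minimum of $\{m+2V_m(K)\}$ is attained at $m = \tau(K) = \nu^+(K)$, which immediately yields $\ours(\mK) = \tau(K)$; the equality with $-\upsilon(K)$ then comes from Proposition~\ref{prop:L-space}. Your route through the explicit torus-knot computation is more concrete but, as you have seen, exposed to arithmetic slips. The quasi-alternating part of your argument is correct and matches the paper's.
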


\begin{proof}
By~\cite[Equation (8)]{AG}, we know that the minimum of $\set{m + 2V_m(K)}$
is attained at $m = \tau(K) = \ni^+(K)$. This implies at once that $\ours(\mK) = \tau(K)$.
The equality with $-\upsilon(K)$ follows from Proposition~\ref{prop:L-space}.

When $K$ is quasi-alternating, $\tau(K) = -\sigma(K)/2$, and the second part of the statement readily follows.
\end{proof}

In many instances, the bound given by $\upsilon$ is better than the one given by $\ours$; this is true, for example, for many knots of the form $K_1\# \ol{K_2}$, where $K_1$ and $K_2$ are L-space knots.

\begin{example}
\label{ex:T25T56}
Consider the two knots $K_1 = T_{2,3}$, $K_2 = T_{5,6}$, and let $K=K_1\# \ol K_2$.
One computes $\sigma(K_1) = -2$, $\sigma(K_2) = -16$, $\upsilon(K_1) = -1$ and $\upsilon(K_2) = -6$.
Using the techniques from~\cite{Krcatovich} as in~\cite{BCG2}, we can also compute
$\ours(K) = 6$ and $\ours(\mK) = 0$.

It follows that the bound given by~\eqref{e:us}, applied to both $K$ and $\ol K$, gives $\gamma_4(K) \ge 1$,
while the bound given by~\eqref{e:OSSz} is $\gamma_4(K) \ge 2$.
\end{example}

As a consequence of Proposition~\ref{p:ours-properties}, we deduce the following interesting feature of $\ours$.

\begin{corollary}
The invariant $\ours(K)$ is subadditive. In particular, the following identity holds:
\[
\lim_{n\to \infty} \frac1n\ours(nK) = \inf_n\frac1n\ours(nK).
\]
\end{corollary}

\begin{proof}
By property (5) of Proposition~\ref{p:ours-properties}, the function $n\mapsto \ours(nK)$ is subadditive,
in the sense that $\ours(aK+bK) \le \ours(aK) + \ours(bK)$ for every $a,b\ge 0$.
The existence of the limit follows from Fekete's lemma~\cite{Fekete}.
\end{proof}


\begin{definition}
\label{def:ourlim}
We call $\ourlim(K) = \lim_n\frac1n\ours(nK)$.
\end{definition}

We now introduce the \emph{stable} non-orientable 4-genus $\gamma_4^{\rm st}(K)$ of $K$, i.e. the limit $\lim_{n\to\infty} \frac1n\gamma_4(nK)$.
Notice that the limit exists since the sequence $(\gamma_4(nK))_n$ is subadditive, and that $\gamma_4^{\rm st}(K) \le \gamma_4(K)$.

\begin{theorem}\label{t:ourlimprop}
The invariant $\ourlim(K)$ is a concordance invariant of $K$, and it descends to a subadditive, homogeneous function $\ourlim \colon \mathcal C \to \mathbb R_{\ge 0}$. Additionally:
\begin{enumerate}
\item $\gamma^{\rm st}_4(K) \ge \frac{\sigma(K)}2 - \ourlim(K)$;
\item if there is an orientable genus-$g$ cobordism between $K_1$ and $K_2$, then $|\ourlim(K_1)-\ourlim(K_2)| \le g$;
\item if there is a crossing change (from negative to positive) from $K_-$ to $K_+$, then $\ourlim(K_-)-1 \le \ourlim(K_+) \le \ourlim(K_-)$.
\end{enumerate}
\end{theorem}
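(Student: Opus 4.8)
The plan is to deduce every assertion from the corresponding statement of Proposition~\ref{p:ours-properties}, by applying it to the $n$-fold connected sums $nK$ (or $nK_1$, $nK_2$), dividing by $n$, and letting $n\to\infty$; throughout I will use that the limit defining $\ourlim(K)$ exists by subadditivity of $n\mapsto\ours(nK)$ and Fekete's lemma, and that $\sigma$ is additive under connected sum.

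First I would settle the structural claims. Concordance invariance is immediate: $\ours$ is a concordance invariant and $nK_1$ is concordant to $nK_2$ whenever $K_1$ is concordant to $K_2$, so each $\frac1n\ours(nK)$, hence $\ourlim(K)$, depends only on the concordance class of $K$; consequently $\ourlim$ descends to a function $\mathcal C\to\mathbb R_{\ge0}$, with non-negativity inherited from $\ours$. Subadditivity follows by applying Proposition~\ref{p:ours-properties}(5) to $nK_1\#nK_2=n(K_1\#K_2)$ and passing to the limit. For homogeneity $\ourlim(mK)=m\,\ourlim(K)$ ($m\ge0$), I would observe that the convergent sequence $\frac1k\ours(kK)$ has the same limit along the subsequence $k=mn$, so $\ourlim(mK)=\lim_n\frac1n\ours(mnK)=m\lim_n\frac1{mn}\ours(mnK)=m\,\ourlim(K)$; I would point out that ``homogeneous'' is meant here for non-negative integers only, since $\ours$, and hence $\ourlim$, is not invariant under mirroring.

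For the three numbered items: (1) apply Theorem~\ref{t:main} to $nK$ to get $\gamma_4(nK)\ge\frac{n\sigma(K)}2-\ours(nK)$, divide by $n$, and let $n\to\infty$, the left-hand side converging to $\gamma^{\rm st}_4(K)$ by definition. (2) A genus-$g$ orientable cobordism from $K_1$ to $K_2$ produces a genus-$ng$ orientable cobordism from $nK_1$ to $nK_2$ (connected sum of $n$ copies of the cobordism), so Proposition~\ref{p:ours-properties}(3) gives $|\ours(nK_1)-\ours(nK_2)|\le ng$; divide by $n$ and take the limit. (3) A single negative-to-positive crossing change from $K_-$ to $K_+$, performed in turn in each of the $n$ summands of $nK_-$, realises $nK_+$ from $nK_-$ through a chain of $n$ negative-to-positive crossing changes; iterating Proposition~\ref{p:ours-properties}(4) along this chain gives $\ours(nK_-)-n\le\ours(nK_+)\le\ours(nK_-)$, and dividing by $n$ and letting $n\to\infty$ finishes the proof.

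I do not expect a genuine obstacle: the only points that need care are checking that the ``$n$-copy'' operations — connected sum of knots, connected sum of cobordisms, and $n$ iterated crossing changes — make the relevant quantities of Proposition~\ref{p:ours-properties} grow at most linearly in $n$, so that dividing by $n$ leaves a finite limit, and the minor bookkeeping in item~(3) that one crossing change on $K$ becomes exactly $n$ crossing changes of the same sign on $nK$. The rest is a routine limit argument.
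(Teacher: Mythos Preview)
Your proposal is correct and follows essentially the same route as the paper's own proof: both deduce everything from Proposition~\ref{p:ours-properties} applied to $nK$ (or $nK_1$, $nK_2$), divide by $n$, and pass to the limit, with homogeneity handled by the subsequence argument. If anything, you spell out items~(2) and~(3) more carefully than the paper, which simply asserts that they ``follow immediately from the corresponding properties of $\ours$''.
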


As an immediate corollary to the theorem, we get the following:
\begin{corollary}
If the inequality in Theorem~\ref{t:main} is sharp, then $\gamma_4(nK) = n\gamma_4(K)$ for each $n$; in particular $\gamma^{\rm st}_4(K) = \gamma_4(K)$.
\end{corollary}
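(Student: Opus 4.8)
The plan is to sandwich $\gamma_4(nK)$ between $n\gamma_4(K)$ from below and from above, so that the sharpness hypothesis forces equality. Recall that sharpness of \eqref{e:us} for $K$ means precisely that $\gamma_4(K) = \frac{\sigma(K)}{2} - \ours(K)$. The upper bound $\gamma_4(nK) \le n\gamma_4(K)$ is immediate from the subadditivity of $\gamma_4$ under connected sum (already invoked in Remark~\ref{rem:sharpness} and before Theorem~\ref{t:ourlimprop}): a punctured non-orientable surface bounding $K$ produces, via $n$-fold boundary connected sum, one bounding $nK$ whose first Betti number is $n$ times as large.

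For the lower bound I would apply Theorem~\ref{t:main} to the knot $nK$, then use additivity of the signature together with the subadditivity of $\ours$ from Proposition~\ref{p:ours-properties}(5):
\[
\gamma_4(nK) \ge \frac{\sigma(nK)}{2} - \ours(nK) \ge \frac{n\,\sigma(K)}{2} - n\,\ours(K) = n\left(\frac{\sigma(K)}{2} - \ours(K)\right) = n\,\gamma_4(K),
\]
where the last equality is the sharpness hypothesis. Combining the two bounds yields $\gamma_4(nK) = n\,\gamma_4(K)$ for every $n \ge 1$. The statement about the stable genus then follows at once, since $\gamma_4^{\rm st}(K) = \lim_{n\to\infty} \frac1n \gamma_4(nK) = \gamma_4(K)$; alternatively one can argue directly that $\gamma_4(K) \ge \gamma_4^{\rm st}(K) \ge \frac{\sigma(K)}{2} - \ourlim(K) \ge \frac{\sigma(K)}{2} - \ours(K) = \gamma_4(K)$, using $\ourlim(K) \le \ours(K)$ and Theorem~\ref{t:ourlimprop}(1).

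I do not anticipate a genuine obstacle: the corollary is a formal consequence of three ingredients that are already in hand --- subadditivity of $\gamma_4$, subadditivity of $\ours$, and additivity of $\sigma$ --- together with the hypothesis. The only points requiring a moment's care are getting the directions of the inequalities right (the lower bound from Theorem~\ref{t:main} applied to $nK$ is what has to catch up to $n\gamma_4(K)$, and it does so exactly because $\ours$ is subadditive rather than additive), and noting that $\frac{\sigma(K)}{2}$ is an integer, so the sharpness equality holds on the nose.
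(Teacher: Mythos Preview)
Your proposal is correct and is precisely the natural argument; the paper in fact leaves the proof implicit, calling the result an immediate corollary of Theorem~\ref{t:ourlimprop}. Your sandwich $n\gamma_4(K) \ge \gamma_4(nK) \ge \frac{\sigma(nK)}{2} - \ours(nK) \ge n\bigl(\frac{\sigma(K)}{2} - \ours(K)\bigr) = n\gamma_4(K)$ via subadditivity of $\gamma_4$, additivity of $\sigma$, and Proposition~\ref{p:ours-properties}(5) is exactly the content of Theorem~\ref{t:ourlimprop}(1) unwound, and your alternative route through $\ourlim$ at the end matches the paper's intended one-line deduction.
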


As remarked for $\ours$ above, $\ourlim$ is not a homomorphism, since it takes only non-negative values.
Note also that $\ourlim$ is not identically 0, since, by Proposition~\ref{prop:Floer-thin} applied to $nK$ for all $n\geq0$, $\ourlim(K)$ coincides with $\sigma(K)/2$ for Floer-thin knots with positive signature.

Also, by definition, $\ourlim(K) \le \ours(K)$, and in particular the bound for $\gamma_4^{\rm st}(K)$ given by $\ourlim$ can be \emph{better} than the bound given by $\ours$ on $\gamma_4(K)$ (see Proposition~\ref{prop:nonsoloilvinomigliorainvecchiando} for an example).
This is by contrast with the bound given, for example, by $\tau$, $s$, or $\nu^+$ on the stable orientable slice genus: the first two are linear, while the third is \emph{sub}linear in $K$~\cite[Theorem 1.4]{BCG2}.

\begin{proof}[Proof of Theorem~\ref{t:ourlimprop}]
The invariant $\ourlim$ is a concordance invariant, since $\ours$ is, and it takes non-negative values, since $\ours$ does.
Moreover, it is subadditive by construction:
\begin{align*}
\ourlim(K\#L) &= \lim_n\left\{\frac1n\ours(n(K\#L))\right\} \le \lim_n \left\{\frac1n(\ours(nK)+\ours(nL))\right\} = \\
&= \lim_n\left\{ \frac1n\ours(nK)\right\}+\lim_n\left\{\frac1n\ours(nL)\right\} = \ourlim(K) + \ourlim(L),
\end{align*}
where the inequality follows from the subadditivity of $\ours$ (Property (5) of Proposition~\ref{p:ours-properties}).

It is also homogeneous, in the sense that $\ourlim(nK) = n\ourlim(K)$: 
\[
\ourlim(nK) = \lim_{m} \frac{1}{m} \ours(mnK) = n \lim_{m} \frac{1}{mn} \ours(mnK) = n \lim_{m'} \frac{1}{m'} \ours(m'K) = n \ourlim(K).
\]

\begin{enumerate}
\item Applying~\eqref{eq:GM} for $nK$ we obtain, for each $n\ge1$:
\[
\gamma_4(nK) \ge \frac{\sigma(nK)}2 - \ours(nK) = n\frac{\sigma(K)}{2} - \ours(nK),
\]
from which
\[
\gamma^{\rm st}_4(K) = \lim_n \frac{\gamma_4(nK)}n \ge \frac{\sigma(K)}2 - \lim_n \frac{\ours(nK)}n = \frac{\sigma(K)}2 - \ourlim(K).
\]
\end{enumerate}
Properties (2) and (3) follow immediately from the corresponding properties of $\ours$, stated in Proposition~\ref{p:ours-properties} above.
\end{proof}

\section{An example}
\label{sec:example}
An interesting feature of $\ourlim$ is that --- by contrast with $\ours$ --- it can attain
non-integer values, as we shall see presently.

To this end, we study an example in detail: we show that $\ourlim(T_{2,3}-T_{5,6}) = \frac{26}5$.
Before doing so, we recall some facts about Krcatovich's reduced knot Floer complex.

In~\cite{Krcatovich}, Krcatovich associates to each knot $J \subset S^3$ a reduced
version of the knot Floer complex, denoted by $\rCFKm(J)$.
The reduced knot Floer complex for L-space knots is of a particularly simple form, in that it only consists of a single \emph{tower},
i.e. it is isomorphic to $\mb F[U]$ as an $\mb F[U]$-module, but \emph{not} as a graded module (see~\cite[Corollary 4.2]{Krcatovich}).

Krcatovich also observed that, if one is only concerned with correction terms, the connected sum of two L-space knots behaves as an L-space knot~\cite[Example 2]{Krcatovich}; more specifically, he showed that if $K$ and $K'$ are L-space knots, then $\rCFKm(K\#K')$ fits in a short exact sequence of complexes:
\[
0\to T \to \rCFKm(K\#K')\to A \to 0,
\]
where $T$ is a tower and $A$ is acyclic.
In this case, we will write $\rCFKm(K\#K')\approx T$; moreover, if $C$ is another chain complex such that $C\approx T$, we will also write $\rCFKm(K\#K')\approx C$.
In Krcatovich's terminology, $\rCFKm(K\#K')$ has a \emph{representative staircase}, which is determined by $T$;
conversely, the staircase determines $T$ and the collection $\{V_i(K\#K')\}$.
Moreover, for any other knot $L$, we can use $T$ as a substitute for $\rCFKm(K\#K')$ to compute $\CFKm(K\#K'\#L)$, in the sense that there is a filtered quasi-isomorphism
\[
T\otimes \CFKm(L) \cong \rCFKm(K\#K')\otimes \CFKm(L).
\]

\begin{proposition}
\label{prop:nonsoloilvinomigliorainvecchiando}
Let $K = T_{2,3} - T_{5,6}$. Then $\ourlim(K) = \frac{26}{5} < \ours(K) = 6$.
Moreover, $\ourlim(K) < \frac{\ours(nK)}{n}$ for all $n \in \Z_{>0}$, so the limit in Definition~\ref{def:ourlim}
is not attained at any $n$.
\end{proposition}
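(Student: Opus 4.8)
\begin{idea}
The plan is to compute $\ours(nK)$ in closed form for every $n\ge 1$ and then pass to the limit. Writing $K=T_{2,3}\#\overline{T_{5,6}}$, we get $nK=(nT_{2,3})\#\overline{nT_{5,6}}$, so
\[
\overline{nK}\;=\;(-nT_{2,3})\,\#\,(nT_{5,6}),
\]
where $-nT_{2,3}$ denotes the connected sum of $n$ left-handed trefoils, and $\ours(nK)=\min_{m\ge 0}\{m+2V_m(\overline{nK})\}$ is determined by the sequence $\{V_m(\overline{nK})\}_m$. Since $T_{2,3}$ and $T_{5,6}$ are L-space knots, $nT_{2,3}$ and $nT_{5,6}$ are connected sums of L-space knots, so by~\cite[Example 2]{Krcatovich} (recalled above) the reduced complexes $\rCFKm(nT_{2,3})$ and $\rCFKm(nT_{5,6})$ are stably equivalent to explicit staircase complexes $\mathcal{S}_n$ and $\mathcal{T}_n$, obtained by iterating Krcatovich's reduction; dualising, $\rCFKm(-nT_{2,3})\approx\mathcal{S}_n^{\vee}$, and hence
\[
\rCFKm(\overline{nK})\;\approx\;\mathcal{S}_n^{\vee}\otimes\mathcal{T}_n.
\]
One cannot shortcut this using $\upsilon$ alone, because $\min_m\{m+2V_m(\cdot)\}$ coincides with $-\Upsilon(1)$ only for L-space knots (and staircases), while $\overline{nK}$ is neither: the defect between $\ours(nK)$ and the ``L-space value'' $-\upsilon(\overline{nK})=\upsilon(nK)=5n$ is exactly what must be pinned down.

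Concretely, first I would identify $\mathcal{S}_n$ and $\mathcal{T}_n$, recording in particular that the $V$-invariants of $-nT_{2,3}$ all vanish (by subadditivity, since $V_m(\overline{T_{2,3}})=0$ for all $m$) together with those of $nT_{5,6}$. Next I would analyse the tensor product $\mathcal{S}_n^{\vee}\otimes\mathcal{T}_n$ following the combinatorics of~\cite{Krcatovich} and~\cite{BCG2} for connected sums of L-space knots and their mirrors, extracting the full sequence $\{V_m(\overline{nK})\}_m$; the crucial point is that, although $\mathcal{S}_n^{\vee}$ carries no $V$'s of its own, its presence in the tensor product \emph{strictly lowers} the $V_m$'s of $nT_{5,6}$ --- the same mechanism that makes~\eqref{e:us} superadditive. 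This gives a closed formula $\ours(nK)=f(n)$, whose leading term yields $\ourlim(K)=\lim_{n\to\infty}f(n)/n=\tfrac{26}{5}$; since $\ours(K)=6$ by Example~\ref{ex:T25T56} and $\tfrac{26}{5}<6$, the first assertion follows. For the last clause, one reads off from $f$ that $f(n)>\tfrac{26}{5}\,n$ for every $n\ge 1$, and, as $\ourlim(K)=\inf_n f(n)/n$ by subadditivity of $\ours$, this shows the infimum is attained at no $n$.

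The hard part is the middle step. Pinning down the staircases of the \emph{connected sums} $nT_{2,3}$ and $nT_{5,6}$ requires genuinely running Krcatovich's reduction (the output is not the staircase of a single torus knot), and the analysis of $\mathcal{S}_n^{\vee}\otimes\mathcal{T}_n$ must be carried out uniformly in $n$ --- or at least periodically enough to extrapolate --- so that the constant $\tfrac{26}{5}$ comes out of an honest formula for $\ours(nK)$ rather than from finitely many computations. Everything afterwards is routine: the comparison with $\ours(K)=6$, and the strict inequalities $f(n)>\tfrac{26}{5}n$. As a consistency check, subadditivity of $\ours$ together with $\ours(T_{2,3})=0$ and $\ours(\overline{T_{5,6}})=6$ (Proposition~\ref{prop:L-space}) forces $\ours(nK)\le\ours(nT_{2,3})+\ours(\overline{nT_{5,6}})\le 0+6n$, hence $\ourlim(K)\le 6$, compatibly with the claimed value.
\end{idea}
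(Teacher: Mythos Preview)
Your outline follows the same broad strategy as the paper --- compute $\ours(nK)$ via the reduced complexes of the two summands and pass to the limit --- but it contains one factual error and misses the two simplifications that actually make the computation feasible.

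First, you assert that the representative staircases of $nT_{2,3}$ and $nT_{5,6}$ are \emph{not} staircases of single torus knots. This is false, and the opposite observation is the linchpin of the paper's argument. For $nT_{2,3}$ the paper proves directly (Lemma~\ref{lemma:staircase2n}) that $\CFKi(nT_{2,3})\cong\CFKi(T_{2,2n+1})\oplus A_n$ with $A_n$ acyclic; for $nT_{5,6}$ it quotes \cite{Bodnar} for $\rCFKm(nT_{5,6})\approx\rCFKm(T_{5,5n+1})$. Once both summands are replaced by single torus knots, one can invoke \cite[Theorem~3.1 and Remark~3.3]{BCG2} to express $\nu^+_v(\overline{nK})$ as a minimum of differences of the enumerating functions of the semigroups $\langle 2,2n+1\rangle$ and $\langle 5,5n+1\rangle$. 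Your plan to ``genuinely run Krcatovich's reduction'' on $\mathcal S_n^\vee\otimes\mathcal T_n$ uniformly in $n$ is exactly what this identification lets you avoid; without it you have no concrete handle on the sequence $\{V_m(\overline{nK})\}_m$.

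Second, the paper does \emph{not} compute $\ours(nK)$ for all $n$. It computes only along the subsequence $n=5\ell$, obtaining the exact value $\ours(L_{5\ell})=26\ell+1$ from the semigroup formula (via three short lemmas that locate the minimum in~\eqref{e:mistero}). This already gives $\ourlim(K)=\lim_\ell (26\ell+1)/5\ell=\tfrac{26}{5}$ and shows the infimum is not attained at multiples of $5$. For the remaining $n$ the paper uses a one-line integrality argument: since $\ours(nK)\ge\tfrac{26}{5}n$ by definition of $\ourlim$ and the right-hand side is an integer only when $5\mid n$, the inequality is strict whenever $5\nmid n$. Your proposal to produce a closed formula $f(n)$ valid for every $n$ is therefore more than is needed, and you give no indication of how to obtain it.

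In short: same destination, but you have misidentified the hard step. The staircase identification with $T_{2,2n+1}$ and $T_{5,5n+1}$ is the crucial input, and restricting to $n\in 5\Z$ plus integrality replaces your uniform-in-$n$ analysis.
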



Before proving the proposition, recall that it is proven in~\cite{Bodnar} that, in the case of torus knots $T_{p,q}$, the representative staircase is determined by the arithmetics of $p$ and $q$ (compare also with~\cite[Section 5]{BorodzikLivingston}).
In what follows, we will be concerned with the connected sum $nT_{5,6}$ of $n$ copies of $T_{5,6}$, and in this case the result reads:
\begin{eqnarray*}
\rCFKm(n T_{5,6}) \approx \rCFKm(T_{5,5n+1}).
\end{eqnarray*}
That is, the representative staircase for $nT_{5,6}$ is the staircase of $T_{5,5n+1}$.

We will also need a lemma about $nT_{2,3}$. This is true in wider generality (see~\cite{Bodnar}), but we prove it here in a special case.
\begin{lemma}\label{lemma:staircase2n}
For each positive integer $n$, the complex $\CFKi(\pm nT_{2,3})$ is filtered chain homotopy equivalent to 
$\CFKi(\pm T_{2,2n+1})\oplus A_{\pm n}$, where $A_{\pm n}$ is an acyclic complex over $\F[U]$.
\end{lemma}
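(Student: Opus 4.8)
The plan is to prove Lemma~\ref{lemma:staircase2n} by explicit induction on $n$, using the tensor product formula for $\CFKi$ under connected sum and Krcatovich's reduction to representative staircases. First I would recall that $T_{2,3}$ is an L-space knot, so $\CFKi(T_{2,3})$ is the standard ``staircase'' complex with one step, and the full knot Floer complex of $nT_{2,3}$ is (filtered chain homotopy equivalent to) the $n$-fold tensor product of this staircase with itself. The key input is Krcatovich's result quoted above: since each $T_{2,3}$ is an L-space knot, $\rCFKm(nT_{2,3}) \approx T$ for a single tower $T$, and the representative staircase of $nT_{2,3}$ coincides, for the purpose of computing correction terms and the $V_i$, with that of an L-space knot whose torsion coefficients match. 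So I would show that the staircase of $T_{2,2n+1}$ is exactly the one produced by iterating the tensor product, the discrepancy between the honest tensor product and the single staircase being precisely an acyclic summand $A_{\pm n}$.

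The main step is the inductive calculation. Assume $\CFKi(nT_{2,3}) \simeq \CFKi(T_{2,2n+1}) \oplus A_{\pm n}$; then
\[
\CFKi((n+1)T_{2,3}) \simeq \bigl(\CFKi(T_{2,2n+1}) \oplus A_{\pm n}\bigr) \otimes \CFKi(T_{2,3}).
\]
Since tensoring with the acyclic complex $A_{\pm n}$ (over $\F[U]$, where $\F[U]$ is a PID and $A_{\pm n}$ is free and acyclic) produces an acyclic complex, that term can be absorbed into the new acyclic summand. It then remains to analyse $\CFKi(T_{2,2n+1}) \otimes \CFKi(T_{2,3})$: one computes this tensor product of two staircases directly and checks, by cancelling pairs of generators connected by differentials (a standard Gaussian elimination / change of filtered basis argument), that it splits as $\CFKi(T_{2,2n+3})$ plus an acyclic piece. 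The arithmetic here is entirely explicit because the staircase of $T_{2,2m+1}$ has exactly one ``step'' in each Alexander grading from $-m$ to $m$, so the bookkeeping is short. For the $-$ (mirror) case one runs the same argument with the dual complexes, or simply notes that mirroring commutes with everything in sight.

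The hard part, and the place where care is needed, is making the ``absorb the acyclic summand'' and ``cancel the extra generators'' steps rigorous at the level of \emph{filtered} chain homotopy equivalence rather than just quasi-isomorphism: one must check that the generators being cancelled are genuinely connected by differentials that do not drop the Alexander filtration (equivalently, that the cancellation can be performed by a filtered change of basis), and that $A_{\pm n}$ is acyclic not only as a $\Z$- or $\F$-complex but as a free $\F[U]$-complex, so that the correction-term data is unaffected. Concretely I would invoke the reduction algorithm of Krcatovich~\cite{Krcatovich} (as used in~\cite{BCG2}), which guarantees that $\rCFKm$ of a connected sum of L-space knots has a representative staircase and that this staircase is exactly the one attached to the matching L-space knot --- here $T_{2,2n+1}$, whose torsion coefficients agree with those of $nT_{2,3}$ by the additivity $\sum_i a_i(nT_{2,3}) $-type computation. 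Given all this, the base case $n=1$ is a tautology, the inductive step is the tensor-product computation above, and the lemma follows.
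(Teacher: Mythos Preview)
Your outline is essentially the paper's proof: reduce the mirror case by duality, induct on $n$, absorb the acyclic tensor factor, and then split $\CFKi(T_{2,2n+1})\otimes\CFKi(T_{2,3})$ as $\CFKi(T_{2,2n+3})$ plus an acyclic piece. The paper carries out this last step by writing down explicit bases: a subcomplex $V$ spanned by $x_1a,\,x_1b,\,x_ic$ isomorphic to $\CFKi(T_{2,2n+3})$, and complementary rank-four acyclic ``boxes'' $W_{2i}$ spanned by $x_{2i}b,\,x_{2i-1}b+x_{2i}a,\,x_{2i+1}b+x_{2i}c,\,x_{2i+1}c$. A rank count shows this is a genuine direct sum of filtered subcomplexes, which is exactly the filtered splitting you want.

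One caution: in your last paragraph you propose to resolve the ``hard part'' by invoking Krcatovich's representative-staircase result. That is not quite the right tool here. Krcatovich works with $\rCFKm$ and produces a short exact sequence $0\to T\to\rCFKm\to A\to 0$ with $A$ acyclic, not a direct-sum decomposition, and not at the level of $\CFKi$ as a bifiltered complex up to filtered chain homotopy equivalence. So appealing to it would only give you the weaker statement that the two complexes have the same correction-term data, not the filtered splitting the lemma asserts. Your other suggestion --- do the explicit tensor-product computation and perform a filtered change of basis --- is the correct route, and is precisely what the paper does with the explicit subcomplexes above.
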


\begin{proof}
%
It suffices to prove the statement for $\CFKi(nT_{2,3})$, since the corresponding statement for $\CFKi(-nT_{2,3})$ follows by taking duals: in fact, $\CFKi(\mK)$ is isomorphic to the dual of $\CFKi(K)$, and taking duals preserves direct sums and acyclicity.

We will now prove the statement for $\CFKi(nT_{2,3})$ by induction on $n$: recall that $\CFKi((n+1)T_{2,3})$ is filtered quasi-isomorphic to $\CFKi(nT_{2,3})\otimes \CFKi(T_{2,3})$, and that $\CFKi(T_{2,3})$ is filtered quasi-isomorphic to $(\F[U,U^{-1}]a\oplus\F[U,U^{-1}]b\oplus\F[U,U^{-1}]c, \partial_1)$, where $\partial_1 b = Ua+c$ and $a$ and $c$ are cycles; moreover, the Alexander gradings of the generators are $A(a) = 1, A(b) = 0, A(c) = -1$.

By induction, we can assume that $\CFKi(nT_{2,3}) = \CFKi(T_{2,2n+1})\oplus A_n$, where $\CFKi(T_{2,2n+1})$ is generated over $\F[U,U^{-1}]$ by $x_1,\dots,x_{2n+1}$, is equipped with the differential $\partial_n$ defined by
\[
\partial_n x_{2i} = Ux_{2i-1} + x_{2i+1}, \quad \partial_n x_{2i+1} = 0,
\]
and the Alexander grading is $A(x_i) = n+1-i$.

We observe that, whenever $A$ is acyclic, $A\otimes C$ is acyclic for every other complex $C$. 
Therefore, in order to prove the theorem, it suffices to show that $\CFKi(T_{2,2n+1})\otimes\CFKi(T_{2,3})\cong \CFKi(T_{2,2n+3})\oplus A$, where $A$ is acyclic.

To this end, consider the subspace $V$ of $\CFKi(T_{2,2n+1})\otimes\CFKi(T_{2,3})$ spanned by:
\[
V = {\rm Span}_{\F[U,U^{-1}]}\left\{x_1a, x_1b, x_ic\right\},
\]
where we drop the $\otimes$ between generators to ease readability, so that $x_1a$ really means $x_1\otimes a$.
It is easy to check that $V$ is in fact a subcomplex of $\CFKi(T_{2,2n+1})\otimes\CFKi(T_{2,3})$, and that $V$ is indeed isomorphic to $\CFKi(T_{2,2n+3})$.
In fact, an explicit isomorphism is given by $x_1a\mapsto x_1, x_1b \mapsto x_2, x_ic\mapsto x_{i+2}$.

We claim that $V$ has a complement, which is the direct sum of copies of rank-4 subspaces $W_{2i}$, for $i=1,\dots, n$.
\[
W_{2i} = {\rm Span}_{\F[U,U^{-1}]}\left\{x_{2i}b, x_{2i-1}b + x_{2i}a, x_{2i+1}b+x_{2i}c,x_{2i+1}c\right\}.
\]
It is easy to prove that $W_{2i}$ is in fact an acyclic subcomplex for each $i$, and that the $W_{2i}$ together with $V$ span all of $\CFKi(T_{2,2n+1})\otimes\CFKi(T_{2,3})$.

Moreover, since the ranks of $V$ and $W_{2i}$ add up to the rank of $\CFKi(T_{2,2n+1})\otimes\CFKi(T_{2,3})$, this is actually a direct sum decomposition of complexes.
Since the $W_{2i}$ are acyclic, we have exhibited the desired decomposition.
\end{proof}

We can now turn to the proof of Proposition~\ref{prop:nonsoloilvinomigliorainvecchiando}.

\begin{proof}[Proof of Proposition~\ref{prop:nonsoloilvinomigliorainvecchiando}]
Let $K_1 = T_{2,3}$ and $K_2 = T_{5,6}$, $K = K_1 - K_2$.
The fact that $\ours(K) = 6$ was already observed in Example~\ref{ex:T25T56}. 
Let now $L_n = nK = nK_1 - nK_2$, and $n = 5\ell$. We will prove that for $\ell \in \Z_{>0}$ we have
\[
\ours(L_{5\ell}) = 26\ell+1.
\]
This implies at once that $\ourlim(K) = \lim_n \frac{\ours(L_n)}{n} = \frac{26}{5}$, and that $\ours(L_{5\ell}) > \ourlim(K)\cdot 5\ell$ for each $\ell$.
Moreover, by definition, for each $n$
\[
\ours(L_n) \ge \frac{26}{5} n
\]
for all $n \in \Z_{>0}$; since right-hand side is an integer only if $n$ is a multiple of $5$, the inequality is strict also for all $n$ not divisible by 5, hence the limit is never attained.


We now set out to prove that $\ours(L_{5\ell}) = 26\ell+1$.

Since $\rCFKm(nK_2)\approx \rCFKm(T_{5,5n+1})$, we can use Lemma~\ref{lemma:staircase2n} and results from~\cite{BCG2} to compute the invariants $V_i(nK_2 - nK_1)$, treating $nK_2$ as $T_{5,5n+1}$ and $-nK_1$ as $-T_{2,2n+1}$.
Indeed, let $J_i = 5\ell K_i$ for $i=1,2$.

Given a semigroup $\G \subseteq \N = \{0,1,\dots\}$, we denote by $\G(\cdot)$ its \emph{enumerating}
\emph{function}, i.e. the unique strictly increasing function
\[
\G \colon \N \to \N
\]
which is surjective on $\G$. Note that $\G(0) = 0$.
Given an integer $x$, we denote $(x)_+ = \max\set{0,x}$.
Since $\CFKi(-nT_{2,3})$ is, up to an acyclic summand, $\CFKi(-T_{2,2n+1})$, we can apply~\cite[Theorem 3.1 and Remark 3.3]{BCG2} and obtain:
\[
\ni^+_v(5\ell\mK) := \min\set{i \,\middle|\, V_i(5\ell\mK) \le v} = \left(\max_{k\geq0}\set{g(J_2) - g(J_1) + \G_{J_1}(k) - \G_{J_2}(k+v)}\right)_+,
\]
where $\G_{J_1}(\cdot)$ and $\G_{J_2}(\cdot)$ are the enumerating functions associated
to the semigroups
\[
\G_{J_1} = \spn{2, 10\ell+1}; \qquad \G_{J_2} = \spn{5, 25\ell+1}.
\]
The genera of the knots $J_1$ and $J_2$ are respectively $5\ell$ and $50\ell$,
so the formula for $\ni^+_v$ becomes
\begin{equation}
\label{e:ni+v}
\ni^+_v(\ol{L_{5\ell}}) = \left(45 \ell - \min_{k\geq0}\set{\G_{J_2}(k+v) - \G_{J_1}(k)}\right)_+.
\end{equation}
Note that, with this notation, we have that
\begin{equation}
\label{e:mistero}
\ours(L_{5\ell}) = \min_{v\geq0}\set{\ni^+_v(\ol{L_{5\ell}}) + 2v},
\end{equation}
which we are now going to compute.

The enumerating functions above can be expressed in the following equations:
\[
\G_{J_1}(k) =
\begin{cases}
2k & 0 \leq k \leq 5\ell\\
5\ell + k & k \geq 5\ell\\
\end{cases}
\]
\[
\G_{J_2}(k) =
\begin{cases}
5k & 0 \leq k \leq 5\ell\\
25\ell + 5\floor*{\frac{k-5\ell}{2}} + [k-5\ell]_2 & 5\ell \leq k \leq 15\ell\\
50\ell + 5\floor*{\frac{k-15\ell}{3}} + [k-15\ell]_3 & 15\ell \leq k \leq 30\ell\\
75\ell + 5\floor*{\frac{k-30\ell}{4}} + [k-30\ell]_4 & 30\ell \leq k \leq 50\ell\\
50\ell + k & k \geq 50\ell\\
\end{cases}
\]

Note that in Equation~\eqref{e:ni+v} we can in fact take the minimum over $0 \leq k \leq 5\ell$,
because for $k \geq 5\ell$ the function $\G_{J_1}(k)$ increases at a lesser or equal rate
than any translate of $\G_{J_2}$: specifically, $\G_{J_1}(k+j) - \G_{J_1}(k) = j \leq \G_{J_2}(k+v+j) - \G_{J_2}(k+v)$.
Therefore
\begin{equation*}
\ni^+_v(\ol{L_{5\ell}}) = \left(45 \ell - \min_{0 \leq k \leq 5\ell}\set{\G_{J_2}(k+v) - \G_{J_1}(k)}\right)_+.
\end{equation*}

Now we return to the proof of Proposition~\ref{prop:nonsoloilvinomigliorainvecchiando}.
Recall that we want to prove that $\ours(5\ell K) = 26\ell+1$. By~\eqref{e:mistero}
we have
\[
\ours(L_{5\ell}) = \min_{v \geq 0} \set{\ni^+_v(\ol{L_{5\ell}}) + 2v}.
\]
As shown in Lemma~\ref{lem:2} below, the choice $v = 13\ell$ gives $\ni^+_v(\ol{L_{5\ell}}) +2v = 26\ell+1$.
Moreover, it also follows from Lemma~\ref{lem:2} that $V_0(\ol{L_{5\ell}}) = 13\ell+1$, hence choosing $v\ge 13\ell+1$ yields $2v \ge 26\ell + 2 > 26\ell+1$.

We now distinguish between $v\le 5\ell-1$ and $v\ge 5\ell$.
By Lemma~\ref{lem:1} below, for $v \in [0, 5\ell-1]$ we have
\[
\ni^+_v(\ol{L_{5\ell}}) + 2v = 45\ell - 3v \geq 45\ell - 15\ell +3 > 26\ell +1;
\]
by Lemma~\ref{lem:3}, on the other hand, for $v \in [5\ell, 13\ell-1]$ we have
\[
\ni^+_v(\ol{L_{5\ell}}) + 2v \geq 2(13\ell-v) + 1 + 2v = 26\ell +1.
\]
This shows that $\ours(L_{5\ell}) = 26\ell + 1$, as desired.
\end{proof}

\begin{lemma}
\label{lem:2}
$\ni^+_{13\ell}(\ol{L_{5\ell}}) = 1$.
\end{lemma}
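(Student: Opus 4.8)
The plan is to compute $\ni^+_{13\ell}(\ol{L_{5\ell}})$ directly from the closed-form expression
\[
\ni^+_v(\ol{L_{5\ell}}) = \left(45\ell - \min_{0\le k\le 5\ell}\set{\G_{J_2}(k+v) - \G_{J_1}(k)}\right)_+,
\]
specialised to $v = 13\ell$. First I would observe that for $0\le k\le 5\ell$ one has $k+v \in [13\ell, 18\ell]$, which straddles the breakpoint $k = 15\ell$ in the piecewise description of $\G_{J_2}$. So I would split the minimisation into the two ranges $0\le k \le 2\ell$ (where $k+13\ell \le 15\ell$, so $\G_{J_2}$ is given by the third line of its definition) and $2\ell \le k \le 5\ell$ (where $\G_{J_2}$ is given by the fourth line). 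On the first range $\G_{J_1}(k) = 2k$ since $k\le 5\ell$, and on the second range likewise $\G_{J_1}(k) = 2k$.

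Next I would substitute and simplify. On $0\le k\le 2\ell$: $\G_{J_2}(k+13\ell) - 2k = 25\ell + 5\floor*{\tfrac{8\ell+k}{2}} + [8\ell+k]_2 - 2k$; writing $k = 2q + r$ with $r\in\{0,1\}$ this becomes an affine function of $k$ with slope $\tfrac52 - 2 = \tfrac12 > 0$ up to the bounded correction term, so the minimum over this range is attained at $k=0$, giving $\G_{J_2}(13\ell) - 0 = 25\ell + 5\floor*{\tfrac{8\ell}{2}} = 25\ell + 20\ell = 45\ell$. Hmm, that gives difference exactly $45\ell$; I should double-check by also evaluating the second range. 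On $2\ell\le k\le 5\ell$: $\G_{J_2}(k+13\ell) - 2k = 75\ell + 5\floor*{\tfrac{k-17\ell}{4}} + [k-17\ell]_4 - 2k$ — wait, $k + 13\ell - 30\ell = k - 17\ell < 0$ on this range, so in fact $k+13\ell \le 15\ell + 3\ell = 18\ell < 30\ell$, meaning the \emph{third} line of $\G_{J_2}$ applies on all of $[2\ell,5\ell]$ too, namely $\G_{J_2}(k+13\ell) = 50\ell + 5\floor*{\tfrac{k-2\ell}{3}} + [k-2\ell]_3$. Then $\G_{J_2}(k+13\ell) - 2k$ has slope $\tfrac53 - 2 = -\tfrac13 < 0$ modulo the bounded correction, so its minimum on $[2\ell,5\ell]$ is near $k = 5\ell$, giving $50\ell + 5\floor*{\tfrac{3\ell}{3}} + 0 - 10\ell = 50\ell + 15\ell - 10\ell = 55\ell$. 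So the overall minimum is $44\ell$, attained for $k$ slightly below $2\ell$. Let me recompute the first range more carefully: for $k$ just below $2\ell$, say $k = 2\ell$, $\G_{J_2}(15\ell) = 25\ell + 5\floor*{\tfrac{10\ell}{2}} + 0 = 50\ell$, minus $2k = 4\ell$, equals $46\ell$; and for $k$ near $0$ we got $45\ell$. A finer analysis of the correction term $5\floor*{(8\ell+k)/2}+[8\ell+k]_2$ as $k$ runs over $\{0,1,2,\dots\}$ shows it equals $4\ell\cdot 5/... $ — the point is the minimum of $\G_{J_2}(k+13\ell)-2k$ over $0\le k\le 5\ell$ works out to $45\ell - 1$ (the bounded corrections shave off exactly $1$), so that $\ni^+_{13\ell}(\ol{L_{5\ell}}) = (45\ell - (45\ell-1))_+ = 1$.

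Thus the core of the argument is a careful bookkeeping of the floor/remainder corrections $5\floor*{\tfrac{\cdot}{m}} + [\cdot]_m$ appearing in $\G_{J_2}$, tracking exactly where the minimum of the difference $\G_{J_2}(k+13\ell) - \G_{J_1}(k)$ is attained and evaluating it there. The main obstacle I expect is precisely pinning down that the minimum equals $45\ell - 1$ rather than $45\ell$: this requires identifying the optimal value of $k$ (which is not an endpoint but determined by the residues of $8\ell$ and of $\ell$ modulo small numbers) and verifying that the affine trends in the two sub-ranges, combined with the periodic corrections, produce a minimum exactly one below $45\ell$. Once that is established, the formula $\ni^+_{13\ell}(\ol{L_{5\ell}}) = (45\ell - (45\ell-1))_+ = 1$ follows immediately, completing the proof of the lemma.
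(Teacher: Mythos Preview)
Your approach is the same as the paper's --- split the range $0\le k\le 5\ell$ at $k=2\ell$, write down the difference $f(k)=\G_{J_2}(k+13\ell)-\G_{J_1}(k)$ explicitly on each piece, and locate its minimum --- but your execution has real gaps and errors, and the key step is never actually carried out.

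First, two concrete errors. On the second range you compute $f(5\ell) = 50\ell + 5\floor{3\ell/3} + 0 - 10\ell$; this equals $50\ell + 5\ell - 10\ell = 45\ell$, not $55\ell$. And the claim ``the overall minimum is $44\ell$'' appears out of nowhere and is false. More importantly, your slope heuristic on the first range is too coarse: you conclude the minimum there is at $k=0$, giving $45\ell$, but in fact $f(1) = 45\ell + 0 + 1 - 2 = 45\ell - 1 < f(0)$. The ``bounded correction'' you wave away is exactly what produces the answer. You acknowledge this yourself at the end (``the main obstacle I expect is precisely pinning down that the minimum equals $45\ell-1$ rather than $45\ell$''), but you never resolve it --- the sentence ``the bounded corrections shave off exactly $1$'' is an assertion, not a proof.

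The paper closes this gap cleanly. On $0\le k\le 2\ell$ one has $f(k)=45\ell+5\floor{k/2}+[k]_2-2k$, so $f(k+2)-f(k)=5-4=1\ge 0$; on $2\ell\le k\le 5\ell$ one has $f(k)=50\ell+5\floor{(k-2\ell)/3}+[k-2\ell]_3-2k$, so $f(k+3)-f(k)=5-6=-1\le 0$. Hence the minimum is attained at one of $k\in\{0,1,5\ell-2,5\ell-1,5\ell\}$, and direct evaluation gives $f(0)=45\ell$, $f(1)=45\ell-1$, $f(5\ell-2)=45\ell$, $f(5\ell-1)=45\ell-1$, $f(5\ell)=45\ell$. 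So the minimum is $45\ell-1$ and $\nu^+_{13\ell}(\ol{L_{5\ell}})=1$. This periodicity-plus-finite-check is the missing idea in your write-up.
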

\begin{proof}
Note that, since $k \leq 5\ell$, $k+13\ell \in [13\ell, 18\ell]$.
Therefore, the difference of the enumerating functions is
\[
f(k) := \G_{J_2}(k+13\ell) - \G_{J_1}(k) =
\begin{cases}
45\ell + 5\floor*{\frac{k}{2}} + [k]_2 -2k & 0 \leq k \leq 2\ell\\
50\ell + 5\floor*{\frac{k-2\ell}{3}} + [k-2\ell]_3 -2k & 2\ell \leq k \leq 5\ell\\
\end{cases}
\]
In the first interval $f(k+2) \geq f(k)$, while in the second interval $f(k+3) \leq f(k)$.
It follows that the minimum is attained for some $k \in \set{0, 1, 5\ell -2, 5\ell -1, 5\ell}$.
A direct computation for these five values shows that the minimum is $45\ell-1$, attained both at $k=1$ and at $k=5\ell-1$.
It follows that $\ni^+_{13\ell}(\ol{L_{5\ell}}) = 45\ell - (45\ell-1) = 1$.
\end{proof}

\begin{lemma}
\label{lem:1}
For each $v = 0,\dots, 5\ell-1$, $\ni^+_v(\ol{L_{5\ell}}) = 45\ell - 5v$.
\end{lemma}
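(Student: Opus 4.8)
The plan is to evaluate the quantity $\min_{0\le k\le 5\ell}\bigl\{\G_{J_2}(k+v)-\G_{J_1}(k)\bigr\}$ that enters formula~\eqref{e:ni+v} (recall that, as noted just before the statement, the minimum may be taken over $0\le k\le 5\ell$), to show that it equals $5v$, and then to check that $45\ell-5v>0$ in the stated range so that the truncation $(\cdot)_+$ is vacuous.

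First I would record that, for $0\le k\le 5\ell$ and $0\le v\le 5\ell-1$, one has $k+v\in[0,10\ell-1]$, so only the first two branches of the formula for $\G_{J_2}$ are relevant, while $\G_{J_1}(k)=2k$ throughout. I then split the range of $k$ according to whether $k+v\le 5\ell$ or $k+v\ge 5\ell$. In the first subcase $\G_{J_2}(k+v)-\G_{J_1}(k)=5(k+v)-2k=3k+5v$, an increasing function of $k$ whose minimum $5v$ is attained at $k=0$ (and $k=0$ does lie in this subcase, since $v\le 5\ell$). In the second subcase I would set $m=k+v-5\ell\ge 0$, use $2k=2m+10\ell-2v$, and compute
\[
\G_{J_2}(k+v)-\G_{J_1}(k)=15\ell+2v+\Bigl(5\floor*{\tfrac m2}+[m]_2-2m\Bigr).
\]
A one-line check on the parity of $m$ gives $5\floor*{m/2}+[m]_2-2m\ge -1$, so this quantity is at least $15\ell+2v-1$, which exceeds $5v$ precisely because $v\le 5\ell-1$ forces $3v\le 15\ell-3<15\ell-1$. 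Hence the overall minimum is $5v$, attained at $k=0$.

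Plugging this into~\eqref{e:ni+v} yields $\ni^+_v(\ol{L_{5\ell}})=\bigl(45\ell-5v\bigr)_+$, and since $v\le 5\ell-1$ we have $45\ell-5v\ge 20\ell+5>0$, so the truncation disappears and $\ni^+_v(\ol{L_{5\ell}})=45\ell-5v$, as claimed. The only step requiring any care is the floor-function estimate in the second subcase; the rest is a direct substitution using the piecewise formulas for $\G_{J_1}$ and $\G_{J_2}$.
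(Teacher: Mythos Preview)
Your proof is correct and follows essentially the same approach as the paper's: both split according to the two relevant branches of $\G_{J_2}$, observe that the first branch gives the increasing function $3k+5v$ with minimum $5v$ at $k=0$, and then show the second branch never beats this. The only cosmetic difference is that the paper handles the second branch by noting $f(k+2)-f(k)\ge 1$ and checking the boundary values $k\in\{0,\,5\ell-v,\,5\ell-v+1\}$, whereas you substitute $m=k+v-5\ell$ and give a uniform lower bound $15\ell+2v-1>5v$; your route is slightly slicker but the idea is the same.
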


\begin{proof}
Note that, since we only need to test $k \leq 5\ell$ when computing the minimum in~\eqref{e:ni+v}, we can assume that for each value of $v$ in the statement $k+v \leq 10\ell-1$.
Therefore, the difference of the enumerating functions is
\[
f(k) := \G_{J_2}(k+v) - \G_{J_1}(k) =
\begin{cases}
5v+3k & 0 \leq k \leq 5\ell - v\\
25\ell + 5\floor*{\frac{k+v-5\ell}{2}} + [k+v-5\ell]_2 -2k & 5\ell - v \leq k \leq 5\ell.\\
\end{cases}
\]
Such a function is increasing on the interval $0 \le k \le  5\ell-v$,
and on the second interval it satisfies the condition $f(k+2) - f(k) \geq 1$.
It follows that the minimum is attained for some $k = 0, 5\ell-v$ or $5\ell-v +1$.
A direct computation for these values shows that the minimum is $5v$, attained at $k=0$.
Therefore, $\nu^+_v(\ol{L_{5\ell}}) = 45\ell-5v$.
\end{proof}

\begin{lemma}
\label{lem:3}
Let $v = 13\ell -s$ for some $0 < s \leq 8\ell$. Then $\ni^+_v(\ol{L_{5\ell}}) \geq 2s+1$.
\end{lemma}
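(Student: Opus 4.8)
The plan is to exploit that Lemma~\ref{lem:3} only asks for a \emph{lower} bound on $\ni^+_v(\ol{L_{5\ell}})$, which, by Equation~\eqref{e:ni+v} together with the reduction of the minimisation range to $0\le k\le 5\ell$ carried out in the proof of Proposition~\ref{prop:nonsoloilvinomigliorainvecchiando}, is equivalent to an \emph{upper} bound on $\min_{0\le k\le 5\ell}\set{\G_{J_2}(k+v)-\G_{J_1}(k)}$. Writing $v=13\ell-s$ with $1\le s\le 8\ell$, it therefore suffices to produce a single index $k\in[0,5\ell]$ with
\[
\G_{J_2}(k+v)-\G_{J_1}(k)\le 45\ell-2s-1,
\]
for this forces $45\ell-\min_{0\le k\le 5\ell}\set{\G_{J_2}(k+v)-\G_{J_1}(k)}\ge 2s+1>0$, so the positive-part operation in Equation~\eqref{e:ni+v} is vacuous and $\ni^+_v(\ol{L_{5\ell}})\ge 2s+1$ follows. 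Unlike in the proofs of Lemmas~\ref{lem:1} and~\ref{lem:2}, no analysis of where the minimum of $f$ is attained is needed.

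The index I would try is $k=0$, so that $\G_{J_1}(0)=0$ and the quantity to estimate is just $\G_{J_2}(13\ell-s)$. Since $1\le s\le 8\ell$ and $\ell\ge1$, the argument satisfies $5\ell\le 13\ell-s\le 15\ell$, so the branch of the piecewise formula for $\G_{J_2}$ valid on $[5\ell,15\ell]$ applies and gives
\[
\G_{J_2}(13\ell-s)=25\ell+5\floor*{\tfrac{8\ell-s}{2}}+[8\ell-s]_2.
\]
Then I would split on the parity of $s$: if $s$ is even this equals $45\ell-\tfrac{5s}{2}$, which is $\le 45\ell-2s-1$ precisely because $s\ge 2$; if $s$ is odd this equals $45\ell-\tfrac{5s+3}{2}$, which is $\le 45\ell-2s-1$ for every $s\ge-1$. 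Hence $\G_{J_2}(13\ell-s)\le 45\ell-2s-1$ for all admissible $s$, and therefore $\ni^+_v(\ol{L_{5\ell}})\ge 45\ell-\G_{J_2}(13\ell-s)\ge 2s+1$.

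I do not expect a genuine obstacle: the argument is strictly simpler than those of the two preceding lemmas, since it uses one value of $k$ rather than a monotonicity analysis of $f$. The two points calling for a little care are (i) verifying that $13\ell-s$ always lands in the interval $[5\ell,15\ell]$ of the piecewise description of $\G_{J_2}$, so that the correct formula is used (immediate from $1\le s\le 8\ell$ and $\ell\ge1$), and (ii) the parity bookkeeping, especially the even case $s=2$, where the estimate $\G_{J_2}(13\ell-s)\le 45\ell-2s-1$ is tight; one should double-check there that $k=0$ still delivers exactly what is required (it does, with equality in the bound, which is all that is needed).
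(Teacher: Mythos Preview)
Your proposal is correct and essentially identical to the paper's proof: both take $k=0$, observe that $13\ell-s$ lies in the $[5\ell,15\ell]$ branch of the piecewise formula for $\G_{J_2}$, and then split on the parity of $s$ to obtain $\G_{J_2}(13\ell-s)\le 45\ell-2s-1$. The only cosmetic differences are that the paper writes the formula as $45\ell+5\floor{-s/2}+[s]_2$ rather than your equivalent $25\ell+5\floor{(8\ell-s)/2}+[8\ell-s]_2$, and that you are slightly more explicit about why the positive-part is vacuous; note also that the reduction of the minimisation range to $0\le k\le 5\ell$ is not actually needed here, since choosing $k=0$ already lies in the original range $k\ge 0$.
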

\begin{proof}
Choosing $k=0$ in Equation~\eqref{e:ni+v}, we obtain:
\[
\ni^+_v(\ol{L_{5\ell}}) \geq 45 \ell - \G_{J_2}(13\ell-s).
\]
Since $13\ell -s \in [5\ell, 13\ell] \subseteq [5\ell, 15\ell]$, we have
\[
\G_{J_2}(13\ell-s) = 45\ell + 5\floor*{-\frac{s}{2}} + [s]_2.
\]
If $s \geq 2$ is even, then $\G_{J_2}(13\ell-s) = 45 \ell - \frac{5}{2}s \leq 45\ell -2s -1$.
If $s$ is odd, then $\G_{J_2}(13\ell-s) = 45 \ell - \frac{5}{2}(s+1) + 1 \leq 45\ell -2s -1$.
In both cases we have $\G_{J_2}(13\ell-s) \leq 45\ell -2s -1$, so we obtain
\[
\ni^+_v(\ol{L_{5\ell}}) \geq 45 \ell - \G_{J_2}(13\ell-s) \geq 2s + 1.
\qedhere
\]
\end{proof}

With techniques similar to the ones used in Proposition~\ref{prop:nonsoloilvinomigliorainvecchiando}
one can to show that $\ourlim$ attains many other positive non-integer values.
We conclude with two questions concerning the image of $\ourlim$.

\begin{question}
Is $\Q_{\geq0} \subseteq \im(\ourlim)$?
Can $\ourlim$ take irrational values?
\end{question}

\bibliographystyle{amsplain}
\bibliography{topology}
\end{document}